\numberwithin{equation}{section}
\newtheorem{theorem}{Theorem}[section]
\newtheorem{prop}[theorem]{Proposition}
\newtheorem{lem}[theorem]{Lemma}
\newtheorem{Rk}[theorem]{Remark}
\newtheorem{definition}[theorem]{Definition}
\newtheorem{claim}[theorem]{Claim}
\newtheorem{conventions}[theorem]{Conventions}
\newcommand{\HS}{\operatorname{HS}}
\newcommand{\ord}{\operatorname{ord}}
\newcommand{\Rid}{\operatorname{Rid}}
\newcommand{\Sing}{\operatorname{Sing}}
\newcommand{\Spec}{\operatorname{Spec}}
\newcommand{\Frac}{\operatorname{Frac}}
\newcommand{\N}{\mathbb{N}}
\newcommand{\Z}{\mathbb{Z}}
\newcommand{\A}{\mathbb{A}}
\newcommand{\ini}{\operatorname{in}}
\newcommand{\gr}{\operatorname{gr}}
\newcommand{\Proj}{\operatorname{Proj}}
\newcommand{\car}{\mathrm{char}}
\newcommand{\Dir}{\mathrm{Dir}}
\newcommand{\fm}{\mathfrak{m}}
\newcommand{\fn}{\mathfrak{n}}
\newcommand{\Bl}{\operatorname{Bl}}
\newcommand{\cP}{\mathcal{P}}
\begin{document}

\title{Constancy of the Hilbert-Samuel function}

\author{Vincent Cossart \footnotemark \hspace{20pt} Olivier Piltant \footnotemark[1] \hspace{15pt} Bernd Schober  \footnotemark[2] }

\footnotetext[1]{Laboratoire de Math\'{e}matiques de Versailles LMV UMR 8100, Universit\'{e} de Paris-Saclay,
45, avenue des \'{E}tats-Unis, 78035 VERSAILLES Cedex, France\\
{\tt vincent.cossart@uvsq.fr, olivier.piltant@uvsq.fr}}
\footnotetext[2]{Institut f\"ur Mathematik
	Carl von Ossietzky, Universit\"at Oldenburg,
	26111 Oldenburg (Oldb),
	Germany	\\
        {\em Current address:} None. (Hamburg, Germany). \\
{\tt schober.math@gmail.com}
\\[5pt]
{\em MSC2020:} 14B05, 14E15, 14J17, 13F40.
\\
{\em Keywords:} Excellent schemes, Hilbert-Samuel function, singularities, resolution of singularities.}

\maketitle

\begin{abstract}
	We prove a criterion for the constancy of the Hilbert-Samuel function for locally Noetherian schemes such that the local rings are excellent at every point.
	More precisely, we show that the Hilbert-Samuel function is locally constant on such a scheme if and only if the scheme is normally flat along its reduction and the reduction itself is regular. Regularity of the underlying reduced scheme is a significant new property.
\end{abstract}

\section{Introduction}

The Hilbert-Samuel function and the multiplicity function are fundamental locally defined invariants on Noetherian schemes. They have been playing an important role in desingularization for many years  \cite{Z,H,B,Gi1,V,BM,CoJS}, and others. Bennett studied upper  semicontinuity of the Hilbert-Samuel function on schemes and linked it with Hironaka's invariant $\nu^*$. He also  proved that it is non increasing under permissible blowing ups.
The latter are blowing ups at regular subschemes along which the singular scheme is normally flat.
For the definition of the Hilbert-Samuel function of $ \mathcal{X} $ and the notion of normal flatness, we refer to Definitions~\ref{Def:HS} and~\ref{Def:normalflat_perm}(1), respectively.

For a reduced scheme, the Hilbert-Samuel function is locally constant if and only if it is regular: this translates the question of resolution of singularities into  a problem of lowering the  Hilbert-Samuel function. We show here that,  this result can be  extended to \textit{non reduced} schemes, as follows.

\begin{theorem}\label{th:th}
Let $\mathcal X$ be a locally Noetherian  scheme such that $\mathcal{O}_{\mathcal{X},x}$ is excellent for every $x\in \mathcal{X}$.
The Hilbert-Samuel function is locally constant on $\mathcal{X}$ if and only if   $\mathcal{X}_{red}$ is everywhere regular and
$\mathcal{X}$ is normally flat on $\mathcal{X}_{red}$.
\end{theorem}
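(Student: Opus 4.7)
The ``if'' direction is the classical Hironaka--Bennett theorem: for a closed subscheme $\mathcal{Y} \subset \mathcal{X}$ with $\mathcal{Y}$ regular, normal flatness of $\mathcal{X}$ along $\mathcal{Y}$ is equivalent to constancy of the Hilbert--Samuel function along $\mathcal{Y}$. Applied with $\mathcal{Y} = \mathcal{X}_{\mathrm{red}}$, and since $\mathcal{X}_{\mathrm{red}} \hookrightarrow \mathcal{X}$ is a topological homeomorphism, this gives local constancy of HS on $\mathcal{X}$. The reverse half of the same theorem also handles the last step in the ``only if'' direction: once $\mathcal{X}_{\mathrm{red}}$ is known to be regular at a point $x$, normal flatness of $\mathcal{X}$ along $\mathcal{X}_{\mathrm{red}}$ at $x$ is automatic from local constancy of HS. So the whole nontrivial content lies in the regularity of $\mathcal{X}_{\mathrm{red}}$, as the abstract emphasizes.

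To prove this regularity at a fixed point $x$, set $R := \mathcal{O}_{\mathcal{X}, x}$ and, by excellence, replace $R$ by its completion $\widehat{R}$. I would argue by induction on $d := \dim R$. The case $d = 0$ is trivial since $R_{\mathrm{red}}$ is then a field. For $d > 0$, local constancy of HS at $x$ descends to every localization $R_{\mathfrak{p}}$ with $\mathfrak{p} \subsetneq \mathfrak{m}$: the HS function of $R_{\mathfrak{p}}$ at $\mathfrak{q} R_{\mathfrak{p}}$ agrees with that of $R$ at $\mathfrak{q}$ for $\mathfrak{q} \subseteq \mathfrak{p}$, and any open of $\mathcal{X}$ on which HS is constant pulls back to such an open in $\Spec(R_{\mathfrak{p}})$. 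The inductive hypothesis thus yields regularity of $(R_{\mathrm{red}})_{\mathfrak{p}}$ and normal flatness of $R_{\mathfrak{p}}$ along $(R_{\mathfrak{p}})_{\mathrm{red}}$ for every $\mathfrak{p} \ne \mathfrak{m}$; equivalently, $R_{\mathrm{red}}$ is regular on the punctured spectrum of $R$.

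Promoting this to regularity at $\mathfrak{m}$ itself is the main obstacle, and the place where the hypothesis on the HS function of the \emph{ambient} $R$---rather than $R_{\mathrm{red}}$ alone---plays an essential role: punctured-spectrum regularity of a reduced ring does not imply regularity, as the standard example of two lines crossing at a point shows. My strategy: by excellence, $\Sing(R_{\mathrm{red}})$ is closed, so assuming for contradiction $\mathfrak{m} \in \Sing(R_{\mathrm{red}})$, one can pick a prime $\mathfrak{p}$ close to $\mathfrak{m}$ at which $R_{\mathrm{red}}$ is regular. There, the inductive conclusion---normal flatness of $R_{\mathfrak{p}}$ along the regular ring $(R_{\mathfrak{p}})_{\mathrm{red}}$---gives a completely explicit description of $H_{R,\mathfrak{p}}$ in terms of the HS function of a regular local ring of dimension $\dim(R_{\mathfrak{p}})_{\mathrm{red}}$ together with the free ranks of the graded pieces $N^k/N^{k+1}$ of the nilradical $N$ over $R_{\mathrm{red}}$. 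Local constancy of HS forces $H_{R,x} = H_{R,\mathfrak{p}}$; the surjection $R \twoheadrightarrow R_{\mathrm{red}}$ gives the comparison $H_{R,x} \geq H_{R_{\mathrm{red}},x}$; and a local ring is regular iff its HS function is minimal among those of local rings of the same dimension. Chasing these inequalities, combined with Bennett's upper semicontinuity of HS applied to $R_{\mathrm{red}}$ and precise control over how the nilpotent filtration contributes, will force $R_{\mathrm{red}}$ to attain the minimal HS function at $\mathfrak{m}$, hence to be regular. Once this is done, the reverse half of Hironaka--Bennett recovers normal flatness of $R$ along $R_{\mathrm{red}}$ at $x$, completing the proof.
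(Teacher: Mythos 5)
Your reduction of the problem is the same as the paper's: the ``if'' direction and, once $\mathcal{X}_{red}$ is known to be regular, the normal flatness, both follow from the Bennett--Hironaka criterion (here \cite[Theorem~3.3]{CoJS}), so everything hinges on proving that local constancy of $H_{\mathcal{X}}$ forces regularity of $\mathcal{X}_{red}$. But at exactly that point your proposal has a genuine gap. The plan ``pick $\mathfrak{p}$ near $\mathfrak{m}$ where $R_{red}$ is regular, decompose $H_{R,\mathfrak{p}}$ via normal flatness of $R_\mathfrak{p}$ along $(R_\mathfrak{p})_{red}$, then chase inequalities using $H_{R,\mathfrak{m}}=H_{R,\mathfrak{p}}$, $H_{R,\mathfrak{m}}\geq H_{R_{red},\mathfrak{m}}$, Bennett semicontinuity for $R_{red}$, and minimality of the HS function of regular rings'' does not close: the inequalities you have all point the same way and never bound $H_{R_{red},\mathfrak{m}}$ from above. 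What you would need is a lower bound of the form $H_{R,\mathfrak{m}} \geq \sum_k \operatorname{rank}_\mathfrak{p}(N^k/N^{k+1})\cdot H_{R_{red},\mathfrak{m}}$ (suitably normalized), i.e.\ precisely the kind of control over the nilpotent filtration that is only available under normal flatness of $R$ along $R_{red}$ at $\mathfrak{m}$ --- which is what is being proved. On $\gr_\mathfrak{m}(R)$ the filtration induced by $N$ has graded pieces that are subquotients of the $N^k/N^{k+1}$ with \emph{induced} (not $\mathfrak{m}$-adic) filtrations, so their graded dimensions admit no such lower bound in general; nothing in semicontinuity supplies it.

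Moreover, the approach cannot see the phenomena that make the statement hard and that occupy most of the paper. Already for affine cones over a non-perfect residue field (the paper's Proposition~\ref{prop:HSconeconstant}), constancy of the Hilbert--Samuel function on $C=V(\sigma_1,\ldots,\sigma_e)$ with $\sigma_i$ additive of degree $q_i$ forces the ``constant terms'' $r_i$ to be $q_e$-th powers; this is an arithmetic statement proved there by a dominance argument over $K=\Frac(k[W])$ plus integral closedness of $k[W]$, and no HS-inequality chase at two points produces it. In the general case the paper needs two structural facts with their own nontrivial proofs: constancy of $H_{\mathcal{X}}$ implies $\Dir_x(\mathcal{X})=(\Rid_x(\mathcal{X}))_{red}$ (Lemma~\ref{lem:lem3}, via Giraud's presentations and Theorem~\ref{th:HSridgecone}), and it implies emptiness of Hironaka's characteristic polyhedron $\Delta(\mathcal{I};u)$ (Lemma~\ref{lem:lem4}, via a blow-up/vertex-preparation descent on $\delta$ and \cite{CoS}); only then does $V(y)=\mathcal{X}_{red}$ come out regular. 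Your sketch replaces all of this by ``precise control over how the nilpotent filtration contributes,'' which is the missing idea rather than a step. (Two smaller points: passing to $\widehat{R}$ and asserting that local constancy of $H$ persists there needs the excellence argument the paper makes via \cite[Lemma~2.37(2)]{CoJS}; and the induction on dimension is never actually used at the crux, since punctured-spectrum regularity of $R_{red}$ --- as the node example shows --- carries no force by itself.)
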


In the case where $\mathcal{X}$ is not reduced of characteristic~$0$ or of dimension~$\leq 2$, using Hironaka's results, there exists a projective morphism composition of permissible blowing ups (cf.~Definition~\ref{Def:normalflat_perm}(2))
$$ \mathcal{X}' \longrightarrow \mathcal{X} $$
such that the Hilbert-Samuel function is locally constant on $\mathcal{X}'$. As a consequence of our theorem, there exists a projective morphism
$  \mathcal{X}' \longrightarrow \mathcal{X} $
such that $\mathcal{X}'_{red}$ is everywhere regular and
$ \mathcal{X}' $ is normally flat along $\mathcal{X}_{red}'$. See  \cite[Corollary~6.19 and Remark~6.20]{CoJS} and \cite[Claim 2.11]{BEV}. At any rate, this sheds a new light on \cite[Theorem  I*, page 138]{H1} reformulated in  \cite[Theorem 11.14]{BM}.


Let us note that in \cite[Proposition~6.14]{V}, O.~Villamayor gets a similar and finer result for the multiplicity function instead of the Hilbert-Samuel function, in the case where  $\mathcal X$ is an equidimensional scheme of finite type over a perfect field $k$: the strata defined by the multiplicity on $\mathcal{X}_{red}$ and $\mathcal{X}$  coincide. This is achieved from a local algebraic and differential description of the maximal multiplicity locus of a variety. See also \cite{A}.
Nonetheless, the hypotheses of Theorem~\ref{th:th} are much more general.

\smallskip

Apart from classical results on Hilbert-Samuel functions, this article makes essential use of Hironaka's characteristic
polyhedron \cite{H} which is an essential ingredient in the proof of Theorem \ref{th:th}. These notions and relevant
properties  are recalled in the next section.

A substantial difficulty here consists in dealing with
nonperfect residue fields of positive characteristic. For instance, Theorem \ref{th:th} is not obvious even  for
cones in affine space over such a field. To overcome this, we recall part of Giraud's theory of
presentations \cite{Gi2} in Section 3 and derive as a byproduct a general result on the Hilbert-Samuel stratum for affine
cones, see Proposition~\ref{Pr: HSconeridge}. The notions of directrix and ridge of a cone play an important role
and we recall them beforehand in Section 2 (Definition \ref{defdirridge}). Finally the proof of
Theorem \ref{th:th} is given in the last section,
 where the main argument involves characteristic polyhedra.

\medskip

\noindent {\it Acknowledgment:} The authors thank A.~Bravo, S.~Encinas, M.~Spivakovsky and O.~Villamayor for fruitful discussions on the topic.

\bigskip

\begin{conventions}
	We follow the convention that zero is a natural number,
	$ \N = \mathbb{Z}_{\geq 0 }$.
	Further, we denote by $ \N^\N $ the set of functions $ \N \to \N $.
	We equip $ \N^\N $ with the product ordering:
	for $ \alpha, \beta \in \N^\N $, we write $ \alpha \leq \beta $ if and only if $ \alpha(n) \leq \beta(n) $ for every $ n \in \N $.
	\\
	We use multi-index notation. For example, we write $ X^A = X_1^{A_1} \cdots X_n^{A_n} $ for $ X = (X_1, \ldots, X_n) $ and $ A = (A_1, \ldots, A_n ) \in \N^n $.
	\\
	All schemes are assumed to be finite dimensional.
\end{conventions}

\section{Invariants of singularities}

Fix a locally Noetherian scheme $ \mathcal{X} $ and a point $ x \in \mathcal{X} $ (not necessarily closed).
We begin by recalling the definition of local invariants for the singularity of $ \mathcal{X} $ at $ x $.
For a reference providing more details on the different notions, we refer to \cite[Chapters~2 and~3]{CoJS}.

\begin{definition}[{\cite[after Lemma~2.22 and Definition~2.28]{CoJS}}] \hfill
		\label{Def:HS}
	\phantomsection

	\begin{enumerate}
	
		\item
		Let $ \mathcal{O} $ be a Noetherian local ring (which is not necessarily regular) with maximal ideal $ \fn $ and residue field $ \kappa $.
		The {\em Hilbert-Samuel function $ H_{\mathcal{O}}^{(0)} \colon \N \to \N $
		of $ \mathcal{O} $}
		is defined by
		\[
			H_{\mathcal{O}}^{(0)} (n) := \dim_\kappa  (\fn^n / \fn^{n+1}),
			\ \ \ \mbox{ for } n \in \N.
		\]
		Furthermore, one defines $ H_{\mathcal{O}}^{(t)} \colon \N \to \N $, for $ t \in \N $ and $ t > 0 $, via the recursion
		\[
			H_{\mathcal{O}}^{(t)} (n) := \sum_{i=0}^{n} H_{\mathcal{O}}^{(t-1)} (i),
			\ \ \ \mbox{ for } n \in \N.
		\]
		
		\item
		Let $ \mathcal{X} $ be a locally Noetherian catenary scheme and fix
		$ N \geq \dim (\mathcal{X}) $.
	 	The {\em modified Hilbert-Samuel function $ H_{\mathcal{X}} := H_{\mathcal{X}}^N \colon \mathcal{X} \to \N^\N $} is defined by
	 	\[
	 		H_{\mathcal{X}} (x) := H_{\mathcal{O}_{\mathcal{X},x}}^{(\phi_{\mathcal{X}}(x))} \in \N^\N ,
	 	\]
	 	where $ \phi_{\mathcal{X}} (x) := N - \min\{ \operatorname{codim}_{\mathcal{Y}} (x) \mid \mathcal{Y} \in \operatorname{Irr}(x) \} $
	 	and $ \operatorname{Irr}(x) $ denotes the set of irreducible components of $ \mathcal{X} $ containing $ x $.
	 	Often, we call $ H_{\mathcal{X}} $ just the Hilbert-Samuel function of $ \mathcal{X} $.

	 	Furthermore, we define, for $ \nu \in \N^\N $,
	 	\[
	 		\begin{array}{l}
	 		\mathcal{X}( \geq \nu) := \{ x \in \mathcal{X} \mid H_{\mathcal{X}} (x) \geq \nu \}
	 		\\
	 		\mathcal{X}( \nu) := \{ x \in \mathcal{X} \mid H_{\mathcal{X}} (x)  = \nu \} 	
	 		\end{array}
	 	\]
	 	and call $ \mathcal{X}(\nu) $ the {\em Hilbert-Samuel stratum of $ \mathcal{X} $ for $ \nu $}.
 \end{enumerate}
\end{definition}

Using the Cohen Structure Theorem for complete Noetherian local rings,
we may assume when necessary that at $ x \in \mathcal{X} $, there is an embedding $ (\mathcal{X} ,x)\subset (\mathcal{Z},x) $ in a regular $ \mathcal{Z} $.
Hence, we reduce the setting to a non-zero ideal $ \mathcal{I} \subset R $  in a (complete) regular local ring $ R $.
More precisely, $ R := \mathcal{O}_{\mathcal{Z},x} $ is the local ring of $ \mathcal{Z} $ at $ x $ and $ \mathcal{I}  $ is the ideal describing $ \mathcal{X} $ locally at $ x $.
We denote by $ \fm \subset R $ the maximal ideal of $ R $ and by $ k := R/\fm $ the residue field of $ R $.

The {\em graded ring of $ R $ at $ \fm $} is defined as
$ \gr_\fm (R) := \bigoplus_{s \geq 0} \fm^s/\fm^{s+1} $.
If we fix a regular system of parameters $ (z) = (z_1, \ldots, z_n) $ for $ R $,
then there is an isomorphism $ \gr_\fm (R) \cong k[X_1,\ldots,X_n] $,
where $ X_i := z_i \mod \fm^2 $ for $ i \in \{ 1, \ldots, n \} $ and $ n := \dim(R) $.

\begin{definition}
	Let $ (R, \fm, k ) $ be a regular local ring.
	\begin{enumerate}
		\item
		Let $ f \in R \setminus \{ 0 \} $ be a non-zero element in $ R $.
		The {\em initial form $ \ini_\fm(f) \in \gr_\fm (R) $ of $ f $ (with respect to $ \fm $)} is defined as the class of $ f $ in $ \fm^{d} / \fm^{d+1} $,
		where $ d := \ord_\fm (f) := \sup\{ a \in \N \mid f \in \fm^a \} $ is the {\em order of $ f $ at $ \fm $}.
		Moreover, we set $ \ini_\fm (0) := 0 $.
		
		\item
		Let $ \mathcal{I}  \subset R $ be a non-zero ideal.
		The {\em ideal of initial forms $ \ini_\fm (\mathcal{I} ) $ of $ \mathcal{I}  $ (with respect to $ \fm $)} is defined as the ideal
		in $ \gr_\fm ( R ) $ generated by the initial forms of all $ f \in \mathcal{I}  $,
		\[
			\ini_\fm (\mathcal{I} ) := ( \ini_\fm (f) \mid f \in \mathcal{I}  ) \subseteq \gr_\fm ( R )
		\]
	\end{enumerate}
\end{definition}

In our setting, $ x \in \mathcal{X} \subset \mathcal{Z} $ with $ \mathcal{O}_{\mathcal{X},x} \cong R/\mathcal{I}  $,
the initial ideal defines the {\em tangent cone $ C_{x} (\mathcal{X}) $ of $ \mathcal{X} $ at $ x $},
\[
C_{x} (\mathcal{X}) := \Spec (\gr_\fm (R) / \ini_\fm (\mathcal{I} ) ) \subseteq  \Spec (\gr_\fm (R) ) \cong  \mathbb{A}^n_k ,
\]
which is a first approximation of the singularity of $ \mathcal{X} $ at $ x $.

\begin{definition}
	\label{Def:std_basis_nu_ast}
	Let $ \mathcal{X} $ be a locally Noetherian scheme, which is embedded in a regular $ \mathcal{Z} $, and let $ x \in \mathcal{X} $ be any point.
	Let $ \mathcal{I} \subset R $ be the ideal defining $ X $ locally at $ x $, where $ (R := \mathcal{O}_{\mathcal{Z},x}, \fm, k ) $ is the respective regular local ring.
	\begin{enumerate}
		\item
		A system of elements $ (f) = (f_1, \ldots, f_m) \in R^m $  is called a {\em standard basis of $ \mathcal{I}  $ at $ \fm $} if
		\begin{enumerate}
			\item
			$ \ini_\fm (\mathcal{I} ) = ( \, \ini_\fm (f_1), \ldots, \ini_\fm (f_m) \, ) $,
			
			\item
			 $ \ini_\fm (f_i) \notin ( \, \ini_\fm (f_1), \ldots, \ini_\fm (f_{i-1}) \, ) $, for every $ i \in \{ 2, \ldots, m \} $,
			 and
			
			 \item
			 if we set $ \nu_i := \ord_\fm (f_i) $ for $ i \geq 1 $, we have  $ \nu_1 \leq \nu_2 \leq \cdots \leq \nu_m $.
		\end{enumerate}
		
		\item
		The {\em $ \nu^* $-invariant of $ \mathcal{X} \subset \mathcal{Z} $ at $ x $} is defined as
		\[
			\nu^*_x ( \mathcal{X}, \mathcal{Z} )
			:= \nu^* _\fm (\mathcal{I} , R )
			:= (\nu_1, \ldots, \nu_m, \nu_{m+1}, \ldots, ),
		\]
		where $ (\nu_1, \ldots, \nu_m ) $ is determined by a standard basis of $ \mathcal{I}  $ at $ \fm $ and $ \nu_j = \infty $ for all $ j \geq m+1 $.
	\end{enumerate}
\end{definition}

Even though the definition of the $ \nu^* $-invariant seems to depend on the choice of a standard basis, this is not the case.
For details, we refer to \cite[Chapter~2]{CoJS} or \cite{H2}.
Following Hironaka, we denote:
\begin{equation}\label{eq:nu*}
\nu^*_x ( \mathcal{X}, \mathcal{Z} ) =:(\nu^1 _x (\mathcal{I} , R ) , \ldots,\nu^m _x (\mathcal{I} , R ) ,\nu^{m+1} _x (\mathcal{I} , R ) ,\ldots, ).
\end{equation}

Hironaka's $ \nu^* $-invariant is an invariant measuring the complexity of the singularity of $ \mathcal{X} $ at $ x $,
which is closely related to the Hilbert-Samuel function in the embedded case.
In order to make the latter more concrete, we introduce the following notation.

\begin{definition}[{\cite[Definition~3.1]{CoJS}}]
	\label{Def:normalflat_perm}
	Let $ \mathcal{X} $ be a locally Noetherian scheme and let $ D \subset \mathcal{X} $ be a reduced closed subscheme.
	Let $ I_D \subset \mathcal{O}_{\mathcal{X}} $ be the ideal sheaf of $ D $ in $ \mathcal{X} $.
	\begin{enumerate}
		\item
		The scheme
		$ \mathcal{X} $ is {\em normally flat along $ D $ at $ x \in D $}
		if the stalk $ \gr_{I_D}(\mathcal{O})_x $ of $ \gr_{I_D} (\mathcal{O}) := \bigoplus_{t \geq 0} I_D^t / I_D^{t+1} $ is a flat $ \mathcal{O}_{D,x} $-module.
		Furthermore, $ \mathcal{X} $ is {\em normally flat along $ D $} if $ \mathcal{X} $ is normally flat along $ D $ at every point of $ D $.
		
		\item
		We say that $ D $ is {\em permissible for $ \mathcal{X} $ at $ x \in D $} if the following three conditions hold:
		\begin{enumerate}
			\item $ D $ is regular at $ x $,
			\item $ \mathcal{X} $ is normally flat along $ D $ at $ x $, and
			\item $ D $ contains no irreducible component of $ \mathcal{X} $ containing $ x $.
		\end{enumerate}
		Moreover,
		$ D $ is {\em permissible for $ \mathcal{X} $}, if $ D $ is permissible for $ \mathcal{X} $ at every point of $ D $.
	\end{enumerate}
\end{definition}

\begin{Rk}
	\label{Rk:CJS_Thm_3.3}
	It follows from \cite[Theorem~3.3]{CoJS} that
	if $ D $ is regular and $ y $ is the generic point of the irreducible component of $ D $ containing $ x $,
	then $ \mathcal{X} $ is normally flat along $ D $ at $ x $ if and only if $ H_{\mathcal{X}}(x) = H_{\mathcal{X}}(y) $.
\end{Rk}

The following results, which is included in \cite[Theorem~3.10]{CoJS} shows the close connection between the Hilbert-Samuel function and Hironaka's $ \nu^* $-invariant
from the perspective of resolution of singularities.

\begin{theorem}[{cf.~\cite[Theorem~3.10]{CoJS}}]
	\label{Thm:CJS_3.10}
	Let $ \mathcal X $ be an excellent scheme, or a scheme which is embeddable in a regular scheme $ \mathcal{Z} $.
	Let $ D \subset \mathcal{X} $ be a permissible closed subscheme and let
	$ \pi_{\mathcal{X}} \colon \mathcal{X}' := \Bl_D  (\mathcal{X}) \to \mathcal{X} $,
	resp.~$ \pi_{\mathcal{Z}} \colon \mathcal{Z}' := \Bl_D  (\mathcal{Z}) \to \mathcal{Z} $,
	be the blowing up with center $ D $.
	Take any points $ x \in D $ and $ x' \in \pi_{\mathcal{X}}^{-1} (x) $.
	Then:
	\begin{enumerate}
		\item
		$ H_{\mathcal{X}'} (x') \leq H_{\mathcal{X}} (x) $
		(with respect to the product ordering on $ \N^\N $).
		
		\item
		$ \nu^*_{x'} (\mathcal{X}',\mathcal{Z}') \leq \nu^*_{x} (\mathcal{X},\mathcal{Z}) $
		(with respect to the lexicographical ordering).
		
		\item
		$ H_{\mathcal{X}'} (x') = H_{\mathcal{X}} (x) $
		if and only if
		$ \nu^*_{x'} (\mathcal{X}',\mathcal{Z}') = \nu^*_{x} (\mathcal{X},\mathcal{Z}) $.
	\end{enumerate}
\end{theorem}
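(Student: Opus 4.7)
The plan is to work locally at a point $x' \in \pi_{\mathcal{X}}^{-1}(x)$ above $x \in D$, passing to completions so that we may identify $\mathcal{O}_{\mathcal{X},x} \cong R/\mathcal{I}$ with $(R,\fm,k)$ a complete regular local ring; this operation preserves both $H_{\mathcal{X}}$ and $\nu^*$. Since $D$ is regular at $x$, I would choose a regular system of parameters $(z_1,\ldots,z_n)$ for $R$ such that $I_D=(z_1,\ldots,z_r)$, and fix a standard basis $(f_1,\ldots,f_m)$ of $\mathcal{I}$ at $\fm$ with $\nu_i=\ord_\fm(f_i)$. All three assertions will then be read off by tracking this basis through an affine chart of the blowing up.

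For (2), the decisive input is normal flatness of $\mathcal{X}$ along $D$ at $x$, which combined with regularity of $D$ forces, by a classical Hironaka-type argument, the equality $\ord_{I_D}(f_i)=\nu_i$ for every $i$; that is, the standard basis is already adapted to the $I_D$-adic filtration. In the affine chart $R' \supset R[z_1/z_s,\ldots,z_r/z_s]$ of $\Bl_D(\mathcal{Z})$ containing $x'$, the strict transforms $f'_i := z_s^{-\nu_i} f_i$ then lie in $R'$ and satisfy $\ord_{\fm'}(f'_i)\leq \nu_i$, while $(f'_1,\ldots,f'_m)$ generates the ideal $\mathcal{I}'$ of $\mathcal{X}'$ at $x'$. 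Extracting a standard basis from this system and reordering by non-decreasing orders gives the lexicographic inequality (2). Assertion (1) follows by comparing Hilbert functions of the graded quotients $\gr_\fm(R)/\ini_\fm(\mathcal{I})$ and $\gr_{\fm'}(R')/\ini_{\fm'}(\mathcal{I}')$: the initial forms of the strict transforms live in degrees $\leq \nu_i$, and a componentwise count in $\N^\N$ (together with Remark~\ref{Rk:CJS_Thm_3.3} applied to the generic point of the exceptional divisor) yields $H_{\mathcal{X}'}(x')\leq H_{\mathcal{X}}(x)$.

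The real content lies in (3). In the bookkeeping underlying (1), the upper bound on $H_{\mathcal{X}'}(x')$ is attained precisely when $(f'_1,\ldots,f'_m)$ is again a standard basis of $\mathcal{I}'$ with the same orders $\nu_i$: this rigidity reflects the fact that the Hilbert function of a graded quotient of a polynomial ring by an ideal generated in prescribed degrees is strictly monotone in those degrees, in the sense relevant to standard bases. Hence $H_{\mathcal{X}'}(x')=H_{\mathcal{X}}(x)$ forces $\nu^*_{x'}(\mathcal{X}',\mathcal{Z}')=\nu^*_{x}(\mathcal{X},\mathcal{Z})$, and the reverse implication follows from the same count. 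The main obstacle I expect is making this rigidity precise when the residue field extension $k \to k' = R'/\fm'$ is nontrivial and possibly transcendental: one must rule out that a degree-preserving but rank-decreasing behaviour of initial forms in the larger graded ring could leave the Hilbert function unchanged while some $\nu_i$ strictly drops, which requires a careful faithful-flatness argument for the extension of graded rings.
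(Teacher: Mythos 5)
This statement is not proved in the paper at all: it is quoted from \cite[Theorem~3.10]{CoJS}, which in turn packages deep theorems of Bennett, Hironaka and Singh on the behaviour of the Hilbert--Samuel function and of $\nu^*$ under permissible blowing up. So what you are attempting is a from-scratch proof of a substantial classical result, and your sketch, while it follows the right general route (pass to the completion, choose parameters with $I_D=(z_1,\ldots,z_r)$, use normal flatness to get a standard basis adapted to the $I_D$-adic filtration, compute strict transforms in a chart), has genuine gaps exactly at the points where the classical proofs do the real work. For (1), the inequality $H_{\mathcal{X}'}(x')\leq H_{\mathcal{X}}(x)$ is Bennett's theorem; it is not obtained by a ``componentwise count'' of degrees of initial forms, because the two sides live over different residue fields and ambient dimensions, which is precisely why the modified function $H^{(\phi_{\mathcal{X}}(x))}$ with its dimension shift appears and why the classical argument first compares $x$ with the generic point of the fibre and then uses upper semicontinuity (specialization) on $\mathcal{X}'$. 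Your appeal to Remark~\ref{Rk:CJS_Thm_3.3} at the generic point of the exceptional divisor does not supply this: that remark compares points of $\mathcal{X}$ along the regular center $D$, not $x'$ with $x$, and it is itself part of the same circle of results in \cite{CoJS} you are trying to prove.

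For (2), you correctly identify the key input ($\ord_{I_D}(f_i)=\nu_i$ and the fact that the strict transforms $f_i'=z_s^{-\nu_i}f_i$ generate $\mathcal{I}'$ at $x'$ --- this is Hironaka's normal-flatness theorem), but ``extracting a standard basis from this system and reordering'' does not by itself give the lexicographic inequality for $\nu^*$: one must control the initial forms of the $f_i'$ inside $\ini_{\fm'}(\mathcal{I}')$, which is the content of Hironaka's argument in \cite{H2}. The most serious gap is in (3): the ``rigidity'' you invoke --- that the Hilbert function of a graded quotient is strictly monotone in the degrees of a generating system ``in the sense relevant to standard bases'' --- is not a correct general statement (the Hilbert function of $S/\ini(\mathcal{I})$ does not determine the degrees of a minimal system of generators of $\ini(\mathcal{I})$), and the equivalence $H_{\mathcal{X}'}(x')=H_{\mathcal{X}}(x)\iff\nu^*_{x'}=\nu^*_x$ is a specific theorem about permissible transforms (Hironaka, Singh), not a formal consequence of such a monotonicity. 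You flag yourself the unresolved difficulty with nontrivial, possibly transcendental residue extensions $k\to k'$; that difficulty is real and is handled in the cited literature, not in your sketch. In the context of this paper the correct ``proof'' is simply the citation; if you want an actual argument you must reproduce the Bennett--Hironaka--Singh proofs, which your proposal does not yet do.
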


\begin{definition}[{\cite[Definition~3.13(1)]{CoJSc}}]
	Let the hypothesis be as in Theorem~\ref{Thm:CJS_3.10}.
	A point $ x' \in \pi_{\mathcal{X}}^{-1}(x) $ is {\em near to $ x $}
	if $  H_{\mathcal{X}'} (x') = H_{\mathcal{X}} (x) $.
\end{definition}

Using the notation of Theorem~\ref{Thm:CJS_3.10},
if $ \mathcal{X} $ is embedded in a regular $ \mathcal{Z} $,
then $ x' \in \pi_{\mathcal{X}}^{-1}(x) $ is near to $ x $ if and only if
$ \nu^*_{x'} (\mathcal{X}',\mathcal{Z}') = \nu^*_{x} (\mathcal{X},\mathcal{Z}) $.		

\medskip

If $ x' \in \pi_{\mathcal{X}}^{-1} (x) $ is not near to $ x $,
the Hilbert-Samuel function (resp.~the $ \nu^\ast $-invariant) detects a strict improvement of the singularity.
Hence, for proving resolution of singularities, it is necessary to find additional invariants, resp.~tools, able to detect an improvement at $ x' $ if the center is chosen suitably.
The directix and the ridge of $ \mathcal{X} $ at $ x $ are objects,
which reveal information on the singularities of the tangent cone $ C_x (\mathcal{X}) $.
They play a crucial role for the task of controlling the locus of near points.

Recall that a polynomial $ f (X) = f(X_1, \ldots, X_n) \in k[X_1, \ldots, X_n] $ is called {\em additive} if $ f(X + Y)  = f(X) + f(Y) $,
where $ (Y) = (Y_1, \ldots, Y_n) $ is a system of indeterminates and
we abbreviate $ (X+Y) := (X_1 + Y_1, \ldots, X_n + Y_n)  $.

\begin{definition}\label{defdirridge}
	\label{Def:Dir_Faite}
	Let $ \mathcal{I} \subset S := k[X_1, \ldots, X_n] $ be an ideal, which is generated by homogeneous elements.
	\begin{enumerate}
		\item
		Let $ \mathcal{T} (\mathcal{I})  \subset \bigoplus_{i=1}^n k X_i $ be the smallest $ k $-vector subspace such that
		\[
			\big( \mathcal{I} \cap k [ \mathcal{T} (\mathcal{I}) ] \big) \cdot  S
			= \mathcal{I} ,
		\]
		where $ k [ \mathcal{T} (\mathcal{I}) ] = \operatorname{Sym}_k(\mathcal{T}(\mathcal{I})) \subseteq S $.
		The {\em directrix of the cone $ \Spec(S/\mathcal{I}) $} is the closed subscheme $ \Dir(S/\mathcal{I}) \subseteq \Spec(S/\mathcal{I}) $ defined by the surjection $ S/\mathcal{I} \to S/ \mathcal{T}(\mathcal{I}) S $.

		\item
		The {\em ridge of the cone $ \Spec(S/\mathcal{I}) $} is the maximal additive subgroup of $ \Spec(S) \cong \mathbb{A}^n_K  $ (considered as
an additive group scheme),
		which leaves the cone $ \Spec(S/\mathcal{I}) $ stable under translation.
	\end{enumerate}

	If $ x \in \mathcal{X} $ is a point of a locally Noetherian scheme $ \mathcal{X} $,
	with an embedding $(\mathcal{X},x)\subset (\mathcal{Z},x)$ for some regular $ \mathcal{Z} $,
	then the directrix $ \Dir_x ( \mathcal{X} ) $ (resp.~ridge $ \Rid_x (\mathcal{X} ) $) of $ \mathcal{X} $ at $ x $ is defined as directrix (resp.~ridge) of the tangent cone $C_x (\mathcal{X})$ of $ \mathcal{X} $ at $ x $, embedded in the Zariski tangent space $T_x(\mathcal{Z})$.
\end{definition}

In fact, one can define the directrix and the ridge of a locally Noetherian scheme $ \mathcal{X} $ at $ x $ without the assumption of an embedding in a regular $ (\mathcal{Z},x) $. Both definitions of directrix and ridge coincide via the embedding $T_x(\mathcal{X})\subset T_x(\mathcal{Z})$.
For details, we refer to \cite[Chapter~2]{CoJS} and \cite[Ch.~I, \S~5)]{Gi1}, respectively.

\medskip

The last notion which we need to recall is Hironaka's characteristic polyhedron \cite{H}, see also \cite[Chapter~18]{CoJS}.
It captures refined information on the singularity of $ \mathcal{X} $ at $ x $,
which is not detected by the tangent cone, the directrix, or the ridge.

\begin{definition}
	\label{Def:CharPoly}
	Let $ (R,\fm,k) $ be a regular local ring and let $ \mathcal{I}  \subset R $ be a non-zero ideal.
	We fix a system $ (u) = (u_1, \ldots,u_e) $ in $ R $, which can be extended to a regular system of parameters for $ R $
	and such that $ \gr_\fm(R)/ \mathcal{T}(\ini_\fm(\mathcal{I} )) \cong k[U_1, \ldots, U_e] $ (using the notation of Definition~\ref{Def:Dir_Faite}),
	where $ U_i := u_i \mod \fm^2 $ is the image of $ u_i $ in the graded ring $ \gr_\fm(R) $.
	\begin{enumerate}
		\item
		Let $ (y) = (y_1, \ldots, y_r) $ be a system of elements in $ R $ such that $ (u,y) $ is a regular system of parameters for $ R $.
		Let $ (f) = (f_1, \ldots, f_m) $ be a standard basis for $ \mathcal{I}  $.
		For every $ i \in \{ 1, \ldots, m \} $, consider expansions $ f_i := \sum_{(A,B)\in\mathbb{Z}^{e+r}_{\geq 0}} C_{A,B,i} u^A y^B $
		with coefficients $ C_{A,B,i} \in R^\times \cup \{ 0\} $.
		The {\em projected polyhedron $\Delta(f;u;y) $ of $ (f) $ with respect to $ (u,y) $}
		is defined as the smallest convex subset of $ \mathbb{R}^e $ which contains all points of the set
		\[
			 \left\{  \frac{A}{\nu_i - |B|} + \mathbb{R}^e_{\geq 0 } \, \bigg| \, C_{A,B,i} \neq 0 \ \wedge \ |B| < \nu_i \right\}
			 ,
		\]
		where $ \nu_i := \ord_\fm (f_i) $, for all $ i \in \{ 1, \ldots, m \} $.
		
		For a vertex $ v \in \Delta(f;u;y) $ and $ i \in \{ 1, \ldots, m \} $,
		the {\em initial form of $ f_i $ at $ v $} is defined as
		\[
			\ini_v (f_i) := \sum_{B : |B| = \nu_i }
			\overline{C_{0,B,i}} \, Y^B + \sum_{(A,B) : \frac{A}{\nu_i - |B|} = v }
			\overline{C_{A,B,i}} \, U^A Y^B \in k[U,Y]
			,
		\]
		where $ \overline{C_{A,B,i}} := C_{A,B,i} \mod \fm  $
		and $ Y_j := y_j \mod \fm^2 $ for $ j \in \{ 1, \ldots, r \} $.
		
		\item
		For fixed $(u)$, the {\em characteristic polyhedron $ \Delta (\mathcal{I} ;u) $ of $ \mathcal{I}  $ at $ \fm $} is defined as the intersection of all projected polyhedra $ \Delta (f;u;y) $, where one varies the choice of $(f;y) $ fulfilling the properties of the first part of the definition,
		\[
			\Delta (\mathcal{I} ;u)
			:= \bigcap_{(f;y)} \Delta(f;u;y).
		\]
		\end{enumerate}
\end{definition}

Given a vertex $ v $ of $ \Delta(g;u;z) $, we say that $ (g;z) $ is {\em prepared at} $v$
if $ (g;u;z) $ is normalized at $ v $ (\cite[Definitions~8.12~and 8.11]{CoJS}) and $v$ is not solvable.
We say that $ (g;z) $ is a {\em suitable choice} if $ (g;u;z) $ is prepared at every vertex $ v $ of $ \Delta(g;u;z) $
(\cite[Definitions~8.11, 8.12~and 8.13]{CoJS}).
The first condition (normalized) is an appropriate choice for the generators of $ {\mathcal{I}}  $,
while the second means that for every vertex $ v \in  \Delta(g;u;z) $ it is impossible to find a change in $ (z) $
such that corresponding projected polyhedron is contained in $  \Delta(g;u;z) $,  $ v $ not being a vertex.

For example, if $ {\mathcal{I}}  $ is the ideal generated by $ f_1 :=  y_1^2 - 2 u_1^2 y +  u_1^4 - u_2^5 $,
then $ \Delta(f_1;u;y_1) $ has two vertices, namely $ v := (2,0) $ and $ w := (0, \frac{5}{2}) $.
It is easy to verify that $ v $ is a solvable vertex:
If we introduce $ z_1 := y_1 - u_1^2 $, then $ f_1 = z_1^2 - u_2^5 $
and the unique vertex $ w $ is not solvable.
In particular, it follows that $ (f_1;z_1) $ is a suitable choice.

By \cite[Theorem~(4.8)]{H}, we have the following equality for a suitable choice $ (g;z) $
\[
	\Delta(g;u;z) = \Delta(\mathcal{I} ;u) ,
\]
and in particular this proves that $\Delta(\mathcal{I} ;u)$ is actually a polyhedron.
In \cite{H}, Hironaka proved that at least in the $ \fm $-adic completion $ \widehat{R} $ of $ R $, there exists a suitable choice $ (\widehat{g}, \widehat{u}) $ for $ (\mathcal{I} \widehat{R}; u) $ such that $ \Delta(\widehat{g}; u ; \widehat{z}) = \Delta (\mathcal{I} ;u) $.
In general, it is not clear, whether there exists a suitable choice without passing to the completion.
It is shown in \cite{CoP,CoS} that Hironaka's result holds without passing to the completion if we assume $ R $ to be excellent and if we additionally require mild technical conditions, which are fulfilled in many cases, e.g., if the residue field of $ R $ is perfect, or if $ R $ is Henselian, or in polynomial situations.

\medskip

While the definition of the characteristic polyhedron depends on an embedding,
it is still a useful source for invariants of the singularity of $ \mathcal{X} $ at $ x $.
For example, the number $ \delta (\Delta(\mathcal{I} ;u)) $, defined just below, is actually an invariant of $ \Spec(R/\mathcal{I} ) $.
This topic has been investigated in great details in \cite{CoJSc}.

\begin{definition}
	\label{Def:delta_first}
	Let $ \Delta \subseteq \mathbb{R}^e_{\geq 0 } $ be a non-empty, closed, convex subset such that $ \Delta + \mathbb{R}_{\geq 0}^e = \Delta $,
	where $ + $ denotes the Minkowski sum.
	Set
	\[
		\delta (\Delta) := \min \{ v_1 + \cdots + v_e \mid v = (v_1, \cdots, v_e) \in \Delta  \}.
	\]
	We define the {\em first face of $ \Delta $} as the face of $ \Delta $ consisting of all points $ v  = (v_1, \ldots, v_e) \in \Delta $ with $ v_1 + \cdots + v_e  = \delta (\Delta) $.
\end{definition}

\section{Giraud's presentations and their application to cones}

To prove Theorem~\ref{th:th}, we need Giraud's Theory of presentations \cite{Gi2}.
In particular, we require Theorem~\ref{th:HSridgecone} below, which is a refinement of \cite[Proposition~4.3]{Gi2}.
A {\em presentation $ \cP $} consists of the following data
\[
(S; R;
z_1, \ldots,z_e;
\mathcal{I} ;
f_1,\ldots,f_m;
n_1,\ldots,n_m;
E;
P_1,\ldots,P_e;
q_1,\ldots,q_e;
s_1,\ldots,s_e)
,
\]
where:

\smallskip

\noindent
(a) $ S \subseteq  R $ are regular local rings with respective maximal ideals $ N $ and $ M $,  such that
the residue field extension is trivial, say $ k := R/M = S/N $,
and such that the natural morphism of graded rings $ \gr_N(S) \to \gr_M (R) $ is flat.

Further, $ (z_1, \ldots, z_e ) $ are differential local coordinates of $ R/S $ (in the sense of \cite[Définition~2.2(iii)]{Gi2}).

\smallskip

\noindent
(b) $ \mathcal{I}  \subset R $ is an ideal such that $ \gr_M(R/\mathcal{I} ) $ is flat over $ \gr_N(S) $.

\smallskip

\noindent
(c)
$ f_1, \ldots, f_m  $ are elements in $ \mathcal{I}  $ such that their images in $ R_0 := R/NR $ form a standard basis\footnote{To ask that it is a standard basis is slightly more restrictive than Giraud's original definition, but it is important for our setting.} for the ideal $ \mathcal{I} _0 := \mathcal{I} R_0 $
and
$ n_1, \ldots,  n_m $ are positive integers such that
$ \nu^*_{M_0} (\mathcal{I} _0, R_0) = (n_1, \ldots, n_m, \infty, \ldots ) $,
for $ M_0 := M R_0 $.

Moreover, $ E \subseteq \mathbb{Z}_{\geq 0}^e $ is a certain subset fulfilling $ E + \mathbb{Z}_{\geq 0}^e  = E $
(the so-called exponents of the ideal $ \mathcal{I} _0 $ with respect to $ (z) $)
such that for all $ i \in \{ 1, \ldots, m \} $ and $ A = (A_1, \ldots, A_e) \in E $ with $ |A| = A_1 + \cdots + A_e < n_i $, we have
$ D_A^{(z)} f_i \equiv 0 $.
Here, $ D_A^{(z)} f_i $ denotes the Hasse-Schmidt derivative of $ f_i = f_i (z)$ with respect to $ z^A $,
meaning the coefficient of $ T^A $ in the Taylor expansion of $ f_i(z+T) = f_i (z_1+T_1, \ldots, z_e+ T_e) $ with respect to $ T = (T_1, \ldots, T_e) $ (when it does exist, e.g., in the completion of $ R $).

\smallskip

\noindent
(d)
$ P_1, \ldots, P_e \in S[X_{A,i} \mid A \in \mathbb{Z}_{\geq 0}^e, i \in \{ 1, \ldots, m \} : |A| < n_i ] $ are polynomials with coefficients in $ S $ that are homogeneous of degree $ q_i $ if we assign to $ X_{A,i} $ the degree $ n_i - |A| $.

Finally, $ s_1, \ldots, s_e \in R $ are elements in $ R $ such that $ s_j = P_j (D_A^{(z)} f_i) $ for all $ j \in \{ 1, \ldots, e \} $.
Additionally, there is also a condition on the Hilbert-Samuel series of $ R_0/ (s) R_0  $, which will be automatically fulfilled in our setting as the elements $ s_i $ will form a triangular system.
	Thus, we do not recall the details here.

\medskip

For the precise definition of a presentation we refer to \cite[D\'efinition~3.1]{Gi2}.

\begin{theorem}\label{th:HSridgecone}
	Let $\cP $ be a presentation as recalled just before.
	Suppose that $ S $ is excellent and contains a field.
	Let $ x \in \mathcal{X} $ be the closed point corresponding to the maximal ideal $ M \subset R $.
		Using the notation of the definition, we further set
		\[
			\mathcal{X} := \Spec (R/\mathcal{I} )
			\ \ \
			\mathcal{X}_i := \Spec (R/ (f_i) ),
			\ \ \mbox{ and } \ \
			\mathcal{Y}_j := \Spec (R/(s_j)),
		\]
		for $ i \in \{ 1, \ldots, m \} $ and $ j \in \{ 1, \ldots, e \} $.
		The Hilbert-Samuel stratum of $ \mathcal{X} $ at $ x $ fulfills the following equality
	\[
	\HS_{\mathcal{X},x}\ = \bigcap_{i=1} ^m \HS_{\mathcal{X}_i,x}\ \subset \bigcap_{j=1} ^e \HS_{\mathcal{Y}_j,x},
	\]
	where we abbreviate $ \HS_{\mathcal{X},x} := \mathcal{X} ( H_{\mathcal{X}} (x) ) $ (Definition~\ref{Def:HS}) and analogously for $ \HS_{\mathcal{X}_i,x} $ and $ \HS_{\mathcal{Y}_j,x} $.
\end{theorem}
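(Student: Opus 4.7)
The plan is to translate each of the containments into a statement about orders of the presentation data and to exploit the graded flatness and the differential coordinates intrinsic to a Giraud presentation. Throughout, $x$ is the closed point corresponding to $M$, so any point $y$ lying in one of the strata specializes to $x$, which lets us use upper semicontinuity of Hironaka's $\nu^{*}$-invariant and of the order along the specialization $y\leadsto x$.

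For the equality $\HS_{\mathcal{X},x}=\bigcap_{i=1}^m \HS_{\mathcal{X}_i,x}$, the inclusion $\subseteq$ I would argue as follows. If $y\in\HS_{\mathcal{X},x}$, then $H_\mathcal{X}(y)=H_\mathcal{X}(x)$. Applied along the closure of $y$, Remark~\ref{Rk:CJS_Thm_3.3} gives normal flatness, and combined with the standard-basis aspect of Theorem~\ref{Thm:CJS_3.10}(3) this forces $\nu^{*}_y(\mathcal{I},R)=\nu^{*}_x(\mathcal{I},R)=(n_1,\dots,n_m,\infty,\dots)$ and implies that $(f_1,\dots,f_m)$ remains a standard basis of $\mathcal{I}_y$ with the same orders $n_i$. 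In particular $\ord_{\mathfrak{p}_y}(f_i)=n_i$, which after the codimension correction of Definition~\ref{Def:HS}(2) translates into $H_{\mathcal{X}_i}(y)=H_{\mathcal{X}_i}(x)$, i.e.\ $y\in\HS_{\mathcal{X}_i,x}$. Conversely, assume $\ord_{\mathfrak{p}_y}(f_i)=n_i$ for every $i$. Upper semicontinuity yields $\nu^{*}_y\leq\nu^{*}_x$ lexicographically; to upgrade this to equality I would use that the initial forms $\ini_{\mathfrak{p}_y}(f_i)$ have the correct degrees and then argue that the graded flatness of $\gr_M(R/\mathcal{I})$ over $\gr_N(S)$ built into the presentation propagates to show they still generate $\ini_{\mathfrak{p}_y}(\mathcal{I}_y)$. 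Then $\nu^{*}_y=\nu^{*}_x$, and hence $H_\mathcal{X}(y)=H_\mathcal{X}(x)$.

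For $\bigcap_i \HS_{\mathcal{X}_i,x}\subseteq\bigcap_j \HS_{\mathcal{Y}_j,x}$, fix $y$ on the left, so every $f_i$ has order $n_i$ at $\mathfrak{p}_y$. Since $(z)$ are differential local coordinates of $R/S$, the operator $D_A^{(z)}$ is differential of order $|A|$, whence $\ord_{\mathfrak{p}_y}(D_A^{(z)}f_i)\geq n_i-|A|$ whenever $|A|<n_i$. As $P_j$ is weighted homogeneous of degree $q_j$ in the $X_{A,i}$ of weight $n_i-|A|$ and has coefficients in $S$, the substitution $s_j=P_j(D_A^{(z)}f_i)$ gives $\ord_{\mathfrak{p}_y}(s_j)\geq q_j$. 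Upper semicontinuity of the order under $y\leadsto x$, combined with $\ord_M(s_j)=q_j$ (built into the presentation), then forces $\ord_{\mathfrak{p}_y}(s_j)=q_j=\ord_M(s_j)$, and hence $H_{\mathcal{Y}_j}(y)=H_{\mathcal{Y}_j}(x)$, i.e.\ $y\in\HS_{\mathcal{Y}_j,x}$.

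The hard part will be the reverse direction of the first equality, that is, promoting the pointwise identity $\ord_{\mathfrak{p}_y}(f_i)=n_i$ to the global statement that $(f_1,\dots,f_m)$ is still a standard basis of $\mathcal{I}_y$. This amounts to propagating the graded flatness of $\gr_M(R/\mathcal{I})$ over $\gr_N(S)$ from $M$ down to $\mathfrak{p}_y$, using the triangular structure of the $s_j$ and the interplay between $(z)$ and $S$ dictated by the presentation. In positive characteristic with non-perfect residue fields this is especially delicate, because $\ini_{\mathfrak{p}_y}(\mathcal{I}_y)$ could a priori acquire new generators not coming from $\ini_M(\mathcal{I})$; the hypothesis that $S$ is excellent and contains a field is precisely what is needed to avoid passage to the completion in Hironaka's preparation theory and to keep the Hasse--Schmidt derivatives well-behaved.
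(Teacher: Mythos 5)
Your strategy is pointwise, and at both crucial steps you pass from equality of Hilbert--Samuel values at $y$ and at $x$ to statements about standard bases and initial forms at $\mathfrak{p}_y$ (and back) by invoking Remark~\ref{Rk:CJS_Thm_3.3} and the ``standard-basis aspect'' of Theorem~\ref{Thm:CJS_3.10} along the closure $\overline{\{y\}}$. Those tools require a \emph{regular} center: Remark~\ref{Rk:CJS_Thm_3.3} is stated for regular $D$, and Hironaka's equivalence of normal flatness with the lifting of a standard basis is likewise only available along regular subschemes. Components of the Hilbert--Samuel stratum (hence $\overline{\{y\}}$) can be singular, so this application is unjustified -- and this is precisely the difficulty the theorem has to overcome. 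Moreover, the inclusion you yourself isolate as ``the hard part'' -- upgrading $\ord_{\mathfrak{p}_y}(f_i)=n_i$ for all $i$ to $H_{\mathcal{X}}(y)=H_{\mathcal{X}}(x)$, i.e.\ showing that the $\ini_{\mathfrak{p}_y}(f_i)$ still generate the initial ideal of $\mathcal{I}R_{\mathfrak{p}_y}$ -- is exactly the content of Giraud's Proposition~4.3 in \cite{Gi2}; saying that ``graded flatness propagates'' restates what must be proved rather than proving it. As written, the proposal establishes neither inclusion of the asserted equality, and it also misplaces the role of the hypotheses: excellence of $S$ and the presence of a field are not there to avoid completions in polyhedron preparation.

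For comparison, the paper does not argue point by point but with reduced irreducible subschemes $\mathcal{H}$ of $\Spec(R)$. When $\mathcal{H}$ is regular, \cite[Proposition~4.3]{Gi2} gives the desired equivalence directly. When $\mathcal{H}$ is singular at $x$, the paper chooses a curve $\Gamma\subset\mathcal{H}$ through $x$ not contained in $\Sing(\mathcal{H})$ and blows up closed points on its strict transforms; by Bennett's results \cite{B} the invariant $\nu^*$ stabilizes and $\Gamma_n$ becomes permissible, so the strict transform $\mathcal{H}_n$ is regular at $x_n$, while the presentation $\mathcal{P}$ transforms along these blowing ups by \cite[Theorem~5.2]{Gi2} -- this is where ``$S$ excellent and contains a field'' is actually used. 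One then applies Proposition~4.3 upstairs to the regular $\mathcal{H}_n$ and descends, since $\mathcal{H}_n$ is a component of the transformed Hilbert--Samuel stratum if and only if $\mathcal{H}$ is one at $x$. Your order computation for the second inclusion (Hasse--Schmidt derivatives drop the order by at most $|A|$, weighted homogeneity of the $P_j$, then semicontinuity of the order) is a reasonable heuristic for $\ord_{\mathfrak{p}_y}(s_j)=q_j$, but it is not the paper's route, it still needs care because the $D_A^{(z)}f_i$ may only exist after completion, and in any case it does not repair the main gap above.
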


\begin{proof}
	Let $\mathcal H  $ be a reduced irreducible subscheme of $ \mathcal{Z} := \Spec (R) $.
	In the case that $ \mathcal H $ is regular, we have by \cite[Proposition~4.3]{Gi2},
\begin{equation}\label{eq:HSridgecone}
\begin{array}{c}
\mathcal{H} \subseteq \HS_{\mathcal{X},x}
\ \  \iff \ \
\displaystyle \mathcal{H}\subseteq \bigcap_{i=1}^m \HS_{\mathcal{X}_i,x}
\\
\displaystyle \mathrm{and}\ \mathcal{H}\subseteq \bigcap_{j=1} ^e \HS_{\mathcal{Y}_j,x},
\ \  \mbox{if} \ \
\mathcal{H} \subseteq \HS_{\mathcal{X},x}
\end{array}
\end{equation}
Suppose that $ \mathcal H $ is singular at $ x $.
We choose a curve $\Gamma$ such that  $x\in \Gamma \subset {\mathcal H}$, $\Gamma \not\subset \Sing(\mathcal H)$.
We perform a sequence of blowing ups
\begin{equation}\label{eq:bups}
\mathcal{Z}=:\mathcal{Z}_0 \stackrel{\pi_1}{\longleftarrow} \mathcal{Z}_1 \stackrel{\pi_2}{\longleftarrow} \cdots \stackrel{\pi_n}{\longleftarrow} \mathcal{Z}_n {\longleftarrow} \ldots,
\end{equation}
where $ \pi_1 \colon \mathcal{Z}_1 \to \mathcal{Z}_0 $ is the blowing up with center $ x $, and for $ i \in \{ 1, \ldots, n-1 \} $, $ \pi_{i+1} \colon \mathcal{Z}_{i+1} \to \mathcal{Z}_{i} $ is the blowing up with center some closed point
$x_i \in \mathcal{Z}_i$, exceptional for $ \pi_{i}$ and on the strict transform  $\Gamma_i$ of $\Gamma$.
By \cite[(2.2.3) p.~71]{B}, there exists $N\in \N$ such that, for $n\geq N$, $\nu^*_{x_n} ( {\mathcal H}_n, \mathcal{Z}_n )= \nu^*_{x_N} ( \mathcal{H}_N, \mathcal{Z}_N )$,
where $\mathcal{H}_n$ (resp. $\mathcal{H}_N$) is the strict transform of $\mathcal{H}$ and $\mathcal{Z}_n$ (resp. $\mathcal{Z}_N$) of $\mathcal{Z}:=\Spec (R)$.
Of course, for $N$ big enough,  $\Gamma_n$ is regular at $x_n$ for $n\geq N$.
Then, by \cite[Proposition~(3.1) p.~74]{B}, $\Gamma_n$ is permissible for
$\mathcal{H}_n$ at $x_n$: the Hilbert-Samuel function of  $\mathcal{H}_n$ is constant along $\Gamma_n$ in a neighbourhood of $x_n$, so $\mathcal{H}_n$  is regular at $x_n$.
Furthermore, $ n $ is defined to be any natural number such that the strict transform $\mathcal{H}_n$  of $\mathcal{H}$ is regular at $x_n$.

Notice that $ \mathcal{H}_n $ is an irreducible component of the Hilbert-Samuel stratum of the strict transform of $ \mathcal{X} $ (resp.~$ \mathcal{X}_i$, resp.~$ \mathcal{Y}_j $)
		at $ x_n $
		in $ \mathcal{Z}_n $ if and only if $ \mathcal{H} $ is an  irreducible component of the Hilbert-Samuel stratum of $ \mathcal{X} $ (resp.~$ \mathcal{X}_i$, resp.~$ \mathcal{Y}_j $)
		at $ x $.
Let $\mathcal{P}_n$ be the transform of $\mathcal{P}$, i.e., the presentation defined by an induction of \cite[Theorem~5.2]{Gi2}
	(which requires the assumption that $ S $ contains a field).
Since $\mathcal{H}_n$ is regular at $x_n$,
we may apply \cite[Proposition~4.3]{Gi2} locally at $ x_n $.
Therefore, \eqref{eq:HSridgecone} holds for $ \mathcal{H}_n $ (with the respective transforms of $ \mathcal{X}, \mathcal{X}_i $, and $ \mathcal{Y} $) and thus the statements also hold for $ \mathcal{H} $.
This proves the assertion.	
\end{proof}

Next, we apply Theorem~\ref{th:HSridgecone} in the setting of cones.
Let us fix the notation:
Let $ k $ be a field  and $\mathcal{I}\subset k[X_1, \ldots,X_n]$ be an ideal generated by homogeneous polynomials.
	We set $C := V (\mathcal{I}) $.
	Let $J, \mathcal{J}$ be the ideals of the directrix and of the ridge of $\mathcal{I}$ respectively (Definition \ref{defdirridge}). Then by \cite[Section~1.5, Lemma~1.6]{Gi2} or \cite[Proposition~5.4 page I-27]{Gi1}, up to renumbering the variables, there exist $(F_1,\ldots,F_m)$ homogeneous generators of $\mathcal{I}$ of degree $n_i=\deg(F_i),\ 1\leq i \leq m,$  with
\begin{equation}\label{eq:cone}
D_A^{(X)}F_i \equiv 0 \ \mathrm{for}\ A\in \exp(\mathcal{I} )=:E,\ i \in \{ 1, \ldots, m \} \mbox{ with } \vert A\vert<\deg F_i,
\end{equation}
where $ \exp(\mathcal{I} ) := \{  \exp(h) \mid  h \in \mathcal{I} \setminus \{ 0 \} \mbox{ homogeneous} \}$
and $ \exp(h) $ denotes the dominant exponent of the polynomial $ h $ with respect to the lexicographical ordering on $ \N^n $.
Furthermore $\mathcal{J}$ is generated by a triangular basis of additive homogeneous polynomials $(s_1,\ldots,s_e)$ with
\begin{equation}\label{eq:faite}
s_i=X_i^{q_i}+\sum_{j>i}c_{i,j}X_j^{q_i},\ c_{i,j}\in k,\ 1\leq i \leq e,
\mbox{ with } q_1\leq q_2\leq \cdots\leq q_e.
\end{equation}
By \cite[Section~1.5, Lemma~1.7]{Gi2} or \cite[Proposition~5.4 page I-27]{Gi1}, there are polynomials $P_1,\ldots,P_e$, $P_i\in k[X_{A,j}  \mid 1\leq j \leq m$, $\vert A \vert <n_{i}]$, $1\leq i \leq m$, homogeneous of degree $q_i$ when we give to  $X_{A,j}$ the degree $n_i - \vert A\vert$,
such that
\begin{equation}\label{eq:P}
s_j=P_j(D_A^{(Z)} F_i).
\end{equation}

To draw the connection with the notion of presentation of \cite{Gi2}, we denote
\[
\begin{array}{l}
Z_i:=X_i, \ 1\leq i \leq e, \ R:=k[Z_1, \ldots,Z_e,X_{e+1},\ldots,X_n]_M,\ M:=(X_1, \ldots,X_n),
\\
 S:=k[X_{e+1}, \ldots, X_m]_N, N:=(X_{e+1}, \ldots, X_m).
\end{array}
\]
The reader will verify that
\begin{prop}\label{pr:revelation}
\begin{equation}\label{eq:Pr}
\begin{array}{rcl}
	\mathcal{P} & := &
	(S;R;Z_1, \ldots,Z_e;\mathcal{I};F_1,\ldots,F_m;n_1,\ldots,n_m;E;
	\\ && \hspace{3cm}  P_1,\ldots,P_e;q_1,\ldots,q_e;s_1,\ldots,s_e)
\end{array}
\end{equation}
is a presentation as defined by Giraud.
\end{prop}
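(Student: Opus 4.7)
The plan is to verify the four conditions (a)--(d) defining a presentation in \cite[D\'efinition~3.1]{Gi2} for the data specified in \eqref{eq:Pr}. For condition (a), both $R$ and $S$ are localizations of polynomial rings over $k$ at irrelevant maximal ideals, hence regular local rings with common residue field $k$, so the residue field extension is trivial. The key observation is that $R$ is the localization of $S[Z_1, \ldots, Z_e]$ at $M$, which simultaneously makes the map $\gr_N(S) \to \gr_M(R)$ the free inclusion $k[X_{e+1}, \ldots, X_n] \hookrightarrow k[X_1, \ldots, X_n]$ and exhibits $(Z_1, \ldots, Z_e)$ as differential local coordinates of $R/S$ in the sense of \cite[D\'efinition~2.2(iii)]{Gi2}.

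For condition (b), I would establish flatness of $\gr_M(R/\mathcal{I})$ over $\gr_N(S)$ by exploiting the ridge-invariance of the cone $V(\mathcal{I})$. Since $\mathcal{I} \subseteq \mathcal{J}$ and the generating system $(s_1, \ldots, s_e)$ of $\mathcal{J}$ is triangular as in \eqref{eq:faite} with leading terms $X_i^{q_i}$, every residue class modulo $\mathcal{I}$ can be reduced to a polynomial of controlled degree in $Z_1, \ldots, Z_e$. This presents $k[X_1, \ldots, X_n]/\mathcal{I}$ as a finitely generated graded $k[X_{e+1}, \ldots, X_n]$-module, and homogeneity together with the invariance of $V(\mathcal{I})$ under ridge translations forces the graded pieces to be free over the corresponding graded pieces of the base, yielding the desired flatness.

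Conditions (c) and (d) are more direct translations of the setup. For (c), equation \eqref{eq:cone} supplies the Hasse-Schmidt derivative vanishing with $E = \exp(\mathcal{I})$, and since each $F_i$ is homogeneous of degree $n_i$, its image in $R_0 = R/NR = k[X_1, \ldots, X_e]_{M_0}$ coincides with its initial form at $M_0$; after arranging $n_1 \leq \ldots \leq n_m$, these images form a standard basis of $\mathcal{I}_0$ with $\nu^*_{M_0}(\mathcal{I}_0, R_0) = (n_1, \ldots, n_m, \infty, \ldots)$. For (d), equation \eqref{eq:P} provides $s_j = P_j(D_A^{(Z)} F_i)$ with the $P_j$ of the prescribed weighted degree per \cite[Section~1.5, Lemma~1.7]{Gi2}, and the auxiliary condition on the Hilbert-Samuel series of $R_0/(s)R_0$ is automatic: the triangular form \eqref{eq:faite} makes $(s_1, \ldots, s_e)$ a regular sequence of homogeneous additive polynomials, realizing $R_0/(s)R_0$ as a graded complete intersection of the expected Hilbert series. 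I expect the main obstacle to be the flatness in (b), which is the only condition requiring a non-trivial use of the algebraic structure of the ridge; the remaining conditions essentially unpack the preparatory statements \eqref{eq:cone}--\eqref{eq:P}.
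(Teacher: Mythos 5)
The paper leaves this verification to the reader, so the only question is whether your verification holds up; conditions (a) and (d) are fine in outline, but your treatment of the flatness condition (b) has a genuine gap. You write that since $\mathcal{I}\subseteq\mathcal{J}$ and $(s_1,\ldots,s_e)$ is triangular with leading terms $X_i^{q_i}$, every class modulo $\mathcal{I}$ can be reduced to controlled degree in $Z_1,\ldots,Z_e$, making $k[X_1,\ldots,X_n]/\mathcal{I}$ a \emph{finitely generated} $k[X_{e+1},\ldots,X_n]$-module. This is backwards: the triangular relations $s_i$ lie in the ridge ideal $\mathcal{J}$, which \emph{contains} $\mathcal{I}$ but need not be contained in it, so they cannot be used to reduce modulo $\mathcal{I}$, and finiteness is simply false in general. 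For instance, for $\mathcal{I}=(X_1X_2)\subset k[X_1,X_2,X_3]$ one has $\mathcal{J}=(X_1,X_2)$, $e=2$, and $k[X_1,X_2,X_3]/(X_1X_2)$ is flat but of infinite rank over $k[X_3]$. The subsequent step, that ``homogeneity together with ridge-invariance forces the graded pieces to be free over the corresponding graded pieces of the base,'' is not an argument (the graded pieces of the base are just $k$-vector spaces), so condition (b) is not established.

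The property you actually need is exactly what \eqref{eq:supp} provides and which your proposal never uses: $F_i\in k[s_1,\ldots,s_e]$ for all $i$. By the triangular, monic shape \eqref{eq:faite}, $k[X_1,\ldots,X_n]$ is finite, and by miracle flatness free, over the polynomial subring $k[s_1,\ldots,s_e,X_{e+1},\ldots,X_n]$; since $\mathcal{I}$ is generated by the $F_i(s_1,\ldots,s_e)$, one gets
\[
k[X]/\mathcal{I}\;\cong\;k[X]\otimes_{k[s,X_{e+1},\ldots,X_n]}\Bigl(\bigl(k[s]/(F_1,\ldots,F_m)\bigr)\otimes_k k[X_{e+1},\ldots,X_n]\Bigr),
\]
which is flat (indeed free, though in general not finite) over $\gr_N(S)=k[X_{e+1},\ldots,X_n]$; since $\mathcal{I}$ is homogeneous this is precisely $\gr_M(R/\mathcal{I})$. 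With this replacement (b) is correct. A secondary point: in (c) you assert without argument that the images of the $F_i$ in $R_0$ form a standard basis of $\mathcal{I}_0$; this is exactly the condition the paper's footnote flags as more restrictive than Giraud's original definition, so it deserves a verification (nonvanishing and minimality of the restricted generators, e.g.\ via the algebraic independence of the restricted $s_j$ and the choice of the $F_i$), not just the remark that homogeneous elements coincide with their initial forms.
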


By \cite[Lemme~1.7]{Gi1}, this presentation has the following supplementary property:
\begin{equation} \label{eq:supp}
F_i\in k[s_1,\ldots,s_e],\ 1\leq i \leq m.
\end{equation}

\begin{Rk} \label{Rk:HSridge}
	By Theorem~\ref{th:HSridgecone} and \eqref{eq:supp}, the Hilbert Samuel strata of the origin of the cone $ C $ and of its ridge locally coincide.
\end{Rk}

The word ``locally" may be skipped, indeed these strata coincide in the full affine scheme $\Spec k[X_1,\ldots,X_n]$, as stated in the proposition below
which makes more precise  M.J.~Pomerol's Theorem \cite[Proposition~2.2]{P}.

\begin{prop}\label{Pr: HSconeridge}
	With the notations of the beginning of this section, the Hilbert Samuel strata of the origin of the cone $C$ and of its ridge coincide in $\Spec  k[X_1, \ldots,X_n]$.
	Their ideals are respectively the reduction of
$$(D^{(X)}_A F_i\ ; \ \vert A \vert < n_i,\ 1\leq i \leq m)$$
and  the reduction of
$$(D^{(X)}_A s_i\ ; \ \vert A \vert < q_i,\ 1\leq i \leq e)$$
which both coincide.
\end{prop}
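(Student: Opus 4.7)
The plan is to identify each stratum with a derivative ideal via Theorem~\ref{th:HSridgecone}, and then to globalize the local equality of Remark~\ref{Rk:HSridge} to all of $ \A^n_k $ using the $ \mathbb{G}_m $-action on cones. The key preliminary observation is that for any homogeneous polynomial $ F \in k[X_1, \ldots, X_n] $ of degree $ n $ and any point $ y \in \A^n_k $, the local order $ \ord_{\fm_y}(F) $ is at most $ n $, with equality if and only if $ D^{(X)}_A F(y) = 0 $ for all multi-indices $ A $ with $ |A| < n $. Since the Hilbert-Samuel function of a hypersurface at a point depends only on the local multiplicity (and the codimension of the point), the stratum $ \HS_{V(F),0} $ coincides globally with the reduced subscheme $ V(D^{(X)}_A F : |A| < n)_{\mathrm{red}} $.

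Applying Theorem~\ref{th:HSridgecone} to the presentation $ \mathcal{P} $ of \eqref{eq:Pr} gives, locally at the origin,
\[
\HS_{C,0} = \bigcap_{i=1}^m \HS_{V(F_i),0}
\ \mbox{ and } \
\HS_{V(s_1, \ldots, s_e),0} = \bigcap_{j=1}^e \HS_{V(s_j),0},
\]
the second equality holding because the ridge is itself an additive group scheme, so its own ridge coincides with itself and $ (s_1, \ldots, s_e) $ is the standard basis of the corresponding presentation (alternatively, since the $ s_j $ form a regular sequence, one can argue directly). Combined with the hypersurface computation, this identifies the two Hilbert-Samuel strata, locally at the origin, with the reduced vanishing loci of the two derivative ideals stated in the proposition. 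To globalize, observe that both strata are $ \mathbb{G}_m $-invariant closed subsets of $ \A^n_k $ (the scaling automorphism of each cone transports stalks at $ y $ to stalks at $ \lambda y $), and both derivative ideals are homogeneous. Any homogeneous radical ideal of $ k[X_1, \ldots, X_n] $ is determined by its localization at $ (X_1, \ldots, X_n) $, hence the local identifications extend to global equalities in $ \Spec k[X_1, \ldots, X_n] $.

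To conclude that the two strata coincide, the inclusion $ \HS_{C,0} \subseteq \HS_{V(s_1, \ldots, s_e),0} $ is the second assertion of Theorem~\ref{th:HSridgecone}, globalized as above. For the reverse inclusion we use \eqref{eq:supp}: each $ F_i $ is a polynomial in $ s_1, \ldots, s_e $, weighted-homogeneous of weighted degree $ n_i $ when $ s_j $ is given weight $ q_j $. Hence if $ \ord_{\fm_y}(s_j) \geq q_j $ for every $ j $, then $ \ord_{\fm_y}(F_i) \geq n_i $ for every $ i $, placing $ y $ in $ \bigcap_i \HS_{V(F_i),0} = \HS_{C,0} $. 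The main obstacle is the globalization step: both Theorem~\ref{th:HSridgecone} and Remark~\ref{Rk:HSridge} are statements local at the marked point, while the conclusion demands equality in all of $ \A^n_k $; the $ \mathbb{G}_m $-invariance coming from the cone structure is the essential bridge.
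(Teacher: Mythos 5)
Your local ingredients are essentially the paper's own (Theorem~\ref{th:HSridgecone} for one inclusion, \eqref{eq:supp} plus weighted homogeneity of the $F_i$ in the $s_j$ for the other, the extreme-case presentation for the ridge), but the step you build everything else on is false in exactly the situation this paper cares about. For a homogeneous $F$ of degree $n$ over a \emph{non-perfect} field $k$, the implication ``$D^{(X)}_AF(y)=0$ for all $|A|<n$ $\Rightarrow$ $\ord_{\fm_y}(F)=n$'' fails: take $k=\F_p(t)$ and $F=X_1^p-tX_2^p$. Every $k$-linear Hasse derivative $D^{(X)}_AF$ with $0<|A|<p$ vanishes identically, yet at the closed point $y$ with $\fm_y=(X_1^p-t,\,X_2-1)$ one has $F=(X_1^p-t)-t(X_2-1)^p$, so $\ord_{\fm_y}(F)=1$ (and similarly at the generic point of $V(F)$); note that here $C=V(F)$ is even equal to its own ridge, so this is not a pathological side case but the typical inseparable phenomenon. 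Only the direction ``$\ord_{\fm_y}(F)=n\Rightarrow$ the lower-order derivatives vanish at $y$'' is valid in general, so your asserted global identification $\HS_{V(F),0}=V(D^{(X)}_AF:\ |A|<n)_{red}$, which carries both the ideal-theoretic half of your argument and your globalization of Theorem~\ref{th:HSridgecone}, breaks down. Detecting the order at points with inseparable residue extensions needs more than the $k$-linear operators $D^{(X)}_A$; this is precisely the ``nonperfect residue field'' difficulty emphasized in the introduction, and it is why the paper does not argue through a pointwise Taylor criterion at arbitrary points of $\A^n_k$.

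There is a second, independent gap in your globalization: to pass from the local statement at the origin to all of $\Spec k[X_1,\ldots,X_n]$ you need the Hilbert--Samuel strata to be subcones, and your justification only invokes the scalings $\lambda\in k^{*}$. Over a finite field these finitely many automorphisms do not force the radical ideal of an invariant closed set to be homogeneous, and repairing this by base change to $\overline{k}$ would require a compatibility of Bennett's normalized function with ground field extension that you neither state nor prove (you also use, implicitly, that $\HS_{C,0}$ is closed, which needs semicontinuity plus maximality of the value at the vertex). The paper obtains the cone property differently and uniformly: blowing up the origin, the strict transforms of $C$ and of its ridge are the tautological line bundles over $\Proj k[X]/\mathcal{I}$ and $\Proj k[X]/\mathcal{J}$, so the strict transforms of the strata are empty or line bundles over strata of these Proj's; hence the strata are subcones of $\A^n_k$, and then the local coincidence of Remark~\ref{Rk:HSridge} (the sound part of your plan) yields the global coincidence, the ideal identities being deduced from Theorem~\ref{th:HSridgecone} rather than from a pointwise derivative criterion. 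As it stands, your argument proves neither the displayed description of the ideals nor, over finite or imperfect fields, the global equality of the strata.
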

\begin{proof}
	Denote by $\mathcal R$ the ridge of $ C $.
Let us blow up the origin. The strict transforms of $C$  and $\mathcal R$ are the tautological line bundles over
$\Proj k[X]/\mathcal{I}$ and $\Proj k[X]/\mathcal{J}$ respectively. So the strict transforms of the Hilbert Samuel strata are empty or line bundles over Hilbert Samuel strata of $\Proj k[X]/\mathcal{I}$ and $\Proj k[X]/\mathcal{J}$ respectively: these Hilbert Samuel strata are subcones
of $\Spec  k[X_1, \ldots,X_n]$ and by Remark~\ref{Rk:HSridge}, they coincide.
	The equalities of the ideals are consequences of Theorem~\ref{th:HSridgecone}.
\end{proof}

\begin{prop} \label{prop:HSconeconstant}
Let $ k $ be a field  and $\mathcal{I}\subset k[X_1, \ldots,X_n]$ be an ideal generated by  homogeneous polynomials, we set $C := V (\mathcal{I}) $. Let $J, \mathcal{J}$ be, respectively, the ideals of the directrix and of the ridge of $\mathcal{I}$. The Hilbert-Samuel function is constant on $ C $ if and only if $ \mathcal{I}_{red}=\mathcal{J}_{red}=J $.
\end{prop}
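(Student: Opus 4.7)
We prove the two directions separately.

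For $(\Leftarrow)$, assume $\mathcal{I}_{red} = \mathcal{J}_{red} = J$. After reordering we may take $J=(X_1,\ldots,X_e)$; the defining property of the directrix lets us choose homogeneous generators of $\mathcal{I}$ in $k[X_1,\ldots,X_e]$, producing an isomorphism
\[
	k[X]/\mathcal{I} \;\cong\; A \otimes_k k[X_{e+1},\ldots,X_n], \qquad A := k[X_1,\ldots,X_e]/(F_1,\ldots,F_m).
\]
The hypothesis $\mathcal{I}_{red}=J$ forces $A$ to be Artin local. At any closed point $x=(0,y_0) \in C$, the local ring is $A \otimes_k \mathcal{O}_{V(J),y_0}$, and a direct computation of its associated graded ring shows that the corresponding Hilbert series equals $h_A(t)/(1-t)^d$, with $d:=\dim C$, independently of $y_0$. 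At the generic point $\eta$ of $C$ one has $\mathcal{O}_{C,\eta}\cong A\otimes_k \kappa(\eta)$, with Hilbert series $h_A(t)$. The $N$-fold iteration defining the modified Hilbert--Samuel function cancels the $(1-t)^d$ factor, so both $H_C(x)$ and $H_C(\eta)$ correspond to the common generating series $h_A(t)/(1-t)^N$. Hence $H_C$ is constant on $C$.

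For $(\Rightarrow)$, assume $H_C$ is constant on $C$ and let $\pi\colon C'\to C$ denote the blowup of the origin, which is permissible since $\{0\}$ is regular and any graded $k$-algebra is flat over $k$. Write $E := \pi^{-1}(0) \cong \Proj(k[X]/\mathcal{I})\subseteq\mathbb{P}^{n-1}$ for the exceptional divisor. Every point of $C\setminus\{0\}$ is identified with a point of $C'\setminus E$ via the strict transform and satisfies $H_{C'}=H_C(0)$. Upper semi-continuity of $H_{C'}$ and specialization then give $H_{C'}(x')\geq H_C(0)$ for every $x'\in E$, while Theorem~\ref{Thm:CJS_3.10} yields the reverse inequality. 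Thus every point of $E$ is near to the origin. By the classical directrix theorem of Hironaka, the locus of such near points is contained in the projectivization $\mathbb{P}(V(J))=V_+(X_1,\ldots,X_e)\subset\mathbb{P}^{n-1}$, which in homogeneous ideals translates into $J\subseteq\mathcal{I}_{red}$. The reverse inclusion $\mathcal{I}_{red}\subseteq J$ is automatic from $F_i\in k[X_1,\ldots,X_e]$. Therefore $\mathcal{I}_{red}=J$. The remaining equality $\mathcal{J}_{red}=J$ then follows from the chain $\mathcal{I}_{red}\subseteq\mathcal{J}_{red}\subseteq J$, whose inclusions come respectively from $\mathcal{I}\subseteq\mathcal{J}$ (equation~\eqref{eq:supp}) and from the containment of the directrix in the ridge.

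The main obstacle is the invocation of the directrix theorem --- that near points over a blown-up closed point are confined to the projectivization of the directrix of the tangent cone. This classical result of Hironaka applies here because the tangent cone of $C$ at the origin is $C$ itself (as $\mathcal{I}$ is homogeneous), and its directrix equals $V(J)$ by construction.
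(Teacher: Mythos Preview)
Your $(\Leftarrow)$ argument is fine and is essentially a direct version of what the paper does via $\nu^*$ and normal flatness.

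The $(\Rightarrow)$ direction, however, has a genuine gap: the ``classical directrix theorem of Hironaka'' that you invoke---near points lie in $\mathbb{P}(\Dir_x(\mathcal{X}))$---is a characteristic-zero statement. In positive characteristic it is \emph{false} in general; what survives unconditionally (Giraud, see also \cite[Theorem~3.14]{CoJS}) is only that near points lie in the projectivization of the \emph{ridge}. Using the ridge in your argument yields merely $\mathcal{J}_{red}\subseteq\mathcal{I}_{red}$, hence $\mathcal{I}_{red}=\mathcal{J}_{red}$, but this says nothing about the crucial equality $\mathcal{J}_{red}=J$. That equality is exactly the nontrivial content of the proposition in characteristic $p$, and it is what the paper works to establish: after reducing to $\mathcal{I}=\mathcal{J}$ via Proposition~\ref{Pr: HSconeridge}, the authors pass to the group scheme $D$ with equal exponents $q_e$, use flatness over $\A^d$ and constancy of the Hilbert--Samuel stratum to force each $\tau_i=Z_i^{q_e}+r_i$ to have a prime factor of order $q_e$ over $K=\Frac(k[W])$, and conclude $r_i\in k[W]^{q_e}$ by integral closedness. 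Your proof bypasses all of this by assuming the directrix bound, which is precisely what is not available.

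To repair your approach you would need an independent reason, valid over an arbitrary field, why constancy of $H_C$ forces near points into $\mathbb{P}(\Dir)$ rather than just $\mathbb{P}(\Rid)$. The paper's Lemma~\ref{lem:lem2} (and its use in Lemma~\ref{lem:lem3}) is essentially that statement, and its proof is not shorter than the argument you are trying to avoid.
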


\begin{proof}

By Proposition~\ref{Pr: HSconeridge}  above, it is enough to prove the statement when  $\mathcal{I}=\mathcal{J}$.

Let us note that, when $\car(k)=0$, additive polynomials are homogeneous of degree~1. Hence, $J=\mathcal{I}$ and $C$ is the intersection of hyperplanes of $\Spec{k[X_1, \ldots ,X_n]}$,
which implies the proposition in this easy case.

\smallskip

Now we consider the case  $\car(k)=p>0$. As seen above ({\it viz.} before stating Proposition \ref{pr:revelation}),
 there exist variables $(Z_1,\ldots,Z_e; W_1,\ldots,W_d)$ in $k[X_1,\ldots,X_n]$ ($d+e=n$) such that
$\mathcal{I}=(\sigma_1,\ldots,\sigma_e)$, where $\sigma_i$ are homogeneous additive polynomials of degrees $q_i$ with $q_1\leq q_2\leq \cdots \leq q_e$ and
$$\sigma_i= Z_i^{q_i}+t_i(Z_{i+1},\ldots,Z_e,W_1,\ldots,W_d).$$

When $\mathcal{I}_{red}=J$, by definition of the directrix, after translations on  $(Z_1,\ldots,Z_e)$ if necessary, we get
$C_{red}=V(Z_1,\ldots,Z_e)$.
Furthermore, for any
point $x\in C$, we have $\nu_x^*(C,\A^n)=(q_1,\ldots,q_e,\infty,\infty,\ldots)$,  where $\A^n_k:=\Spec k[Z_1,\ldots,Z_e; W_1,\ldots,W_d]$ (Definition~\ref{Def:std_basis_nu_ast}(2)).
This implies that $C$ is normally flat over $C_{red}$ (\cite[Theorem~3.2(2)]{CoJS}), which is equivalent to the constancy of the Hilbert-Samuel function on  $C$ (\cite[Theorem~3.3]{CoJS}). This proves the  converse implication in the proposition.

Let us prove the direct implication. We are in the very extreme case where, in the corresponding presentation \eqref{eq:Pr}, $e=m,F_i=s_i$, $1\leq i \leq e$.
	Then, by Theorem~\ref{th:HSridgecone}, $\bigcap_{i=1} ^m \HS_{\mathcal{C}_i,x}= \HS_{\mathcal{C},x}=C_{red}$.
Consider the additive group subscheme $ D \subset \A^n$ defined by the equations $\sigma_i^{q_e/q_i}, 1 \leq i \leq e$.
Then  $D $  is  flat over $\A^d=\Spec k[W_1,\ldots,W_d]$. Note that $D_{red}=C_{red}$.

As $ D $ is a cone, Theorem \ref{th:HSridgecone} applied to $D$ implies that $ \HS_{D,x} = D_{red} $.
	In other terms,
it can be assumed that $q_1 = \cdots  = q_e$.
In this case, we may replace the $\sigma_i$ by some  $\tau_i = Z_i^{q_e}+ r_i$, $r_i\in k[W_1,\ldots,W_d]$  for $1 \leq i \leq e$
by performing a
linear change of coordinates.
 Since $\HS_{D,x} = D_{red} $ and since the natural morphism $\eta \colon D \longrightarrow \A^d$
is dominant, each polynomial $s_i$ has a prime factor of order $ q_e$ as an element of $K[Z_i]$, where $K = \Frac(k[W_1,\ldots,W_d])$.
 Therefore $r_i \in K^{q_e}$. Now $k[W_1,\ldots,W_d]$ is integrally closed, so $r_i \in k[W_1,\ldots,W_d]^{q_e}$. Up to an affine $k[W_1,\ldots,W_d]$-linear change of coordinates on $\A^n$, we thus have $\tau_i =Z_i^{q_e}$, $1\leq i\leq e$, so the conclusion holds.
\end{proof}

\section{Proof of the Theorem}

Recall that our goal is to prove that the Hilbert-Samuel function is locally constant on a Noetherian  scheme $ \mathcal{X} $
such that $\mathcal{O}_{\mathcal{X},x}$ is excellent for every $x\in \mathcal{X}$
if and only if its reduction $ \mathcal{X}_{red} $ is regular and $ \mathcal{X} $ is normally flat along $ \mathcal{X}_{red} $.
By \cite[Theorem~3.3]{CoJS} (see Remark~\ref{Rk:CJS_Thm_3.3}),
we have only need to prove the following~: if the Hilbert-Samuel function is  constant  on $\mathcal{X}$,  then  $\mathcal{X}_{red}$ is everywhere regular.

%



%
%
%

%
%
\begin{lem} \label{lem:lem3}	
Let $ \mathcal {X}=\Spec A $ with $A$ a catenary Noetherian local ring,	
 let  $x\in\mathcal X$ be the closed point.
 Assume that the Hilbert-Samuel function is constant on $ \mathcal {X}$. Let  $ \Dir_x (\mathcal{X})\subset \Rid_x (\mathcal{X}) \subset T_x(\mathcal{X})$ be respectively the directrix and the ridge of the tangent cone of $\mathcal{X}$ at $x$, embedded in the Zariski tangent space $T_x(\mathcal{X})$
 (Definition \ref{defdirridge} and following comments). Then, we have
 $$ \Dir_x (\mathcal{X}) = (\Rid_x (\mathcal{X}))_{red}.$$
 \end{lem}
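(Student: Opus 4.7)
The inclusion $\Dir_x(\mathcal{X}) \subseteq (\Rid_x(\mathcal{X}))_{red}$ is automatic: $\Dir_x(\mathcal{X}) \subseteq \Rid_x(\mathcal{X})$ is given, and the directrix is reduced since it is a linear subspace of $T_x(\mathcal{X})$. The content of the lemma is therefore the reverse inclusion. My plan is to deduce it by applying Proposition~\ref{prop:HSconeconstant} to the tangent cone $C_x(\mathcal{X})$: once it is established that $H_{C_x(\mathcal{X})}$ is constant on $C_x(\mathcal{X})$, Proposition~\ref{prop:HSconeconstant} yields $\ini_\fm(\mathcal{I})_{red} = \mathcal{J}_{red} = J$ for the ideals $J,\mathcal{J}$ of the directrix and ridge, and the middle equality is exactly $(\Rid_x(\mathcal{X}))_{red} = \Dir_x(\mathcal{X})$.

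To set up, one may pass to the completion $\widehat{A}$, which affects neither the Hilbert-Samuel function nor the directrix or ridge, as all are invariants of $\gr_\fm(A)$. The Cohen Structure Theorem gives an embedding $\widehat{A} = R/\mathcal{I}$ with $(R,\fm)$ regular local, and then $C_x(\mathcal{X}) = \Spec(\gr_\fm(R)/\ini_\fm(\mathcal{I})) \subseteq \Spec(\gr_\fm(R)) \cong \mathbb{A}^n_k$, where $k$ is the residue field. The key technical step is to propagate constancy of $H_\mathcal{X}$ to the tangent cone. At the vertex of $C_x(\mathcal{X})$, Bennett's comparison of a Noetherian local ring with its associated graded gives $H_{C_x(\mathcal{X})}(\text{vertex}) = H_\mathcal{X}(x)$, after checking that the dimension shifts $\phi$ of Definition~\ref{Def:HS} match (where the catenary hypothesis is used). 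For an arbitrary graded prime $\mathfrak{q} \subset \gr_\fm(A)$, the flat Rees algebra deformation from $A$ to $\gr_\fm(A)$ associates a prime $\mathfrak{p} \subset A$ with $\mathfrak{q} \supseteq \ini_\fm(\mathfrak{p})$; combining flatness of the deformation with Bennett's upper semi-continuity of the Hilbert-Samuel function yields
\[
H_\mathcal{X}(\mathfrak{p}) \ \leq \ H_{C_x(\mathcal{X})}(\mathfrak{q}) \ \leq \ H_{C_x(\mathcal{X})}(\text{vertex}) \ = \ H_\mathcal{X}(x).
\]
The hypothesis $H_\mathcal{X}(\mathfrak{p}) = H_\mathcal{X}(x)$ then forces equality throughout, so $H_{C_x(\mathcal{X})}$ is constant on $C_x(\mathcal{X})$ and Proposition~\ref{prop:HSconeconstant} concludes the proof.

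The main obstacle is precisely this propagation of constancy from $\mathcal{X}$ to its tangent cone. One must carefully handle the dimension shift $\phi$ of the modified Hilbert-Samuel function of Definition~\ref{Def:HS}, which in general differs between $\mathcal{X}$ and $C_x(\mathcal{X})$, and verify that every graded prime of $\gr_\fm(A)$ indeed dominates the initial ideal of some prime of $A$ carrying the matching Hilbert-Samuel data. The catenary hypothesis on $A$ is essential here, guaranteeing consistent codimension bookkeeping for the minimal primes through $x$ and their counterparts in the normal cone, and ensuring that the two Hilbert-Samuel chains described above can be aligned term-by-term.
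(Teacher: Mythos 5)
Your overall strategy is genuinely different from the paper's: you want to transfer constancy of the Hilbert--Samuel function from $\mathcal{X}$ to its tangent cone $C_x(\mathcal{X})$ and then quote Proposition~\ref{prop:HSconeconstant}, whereas the paper blows up $x$ once and plays two facts against each other: constancy plus specialization and the Bennett--Giraud non-increase force \emph{every} point of $\pi^{-1}(x)$ to be near (Remark~\ref{Rk:semicon}), while Lemma~\ref{lem:lem2} (via Giraud's result that near points lie on the strict transform of the ridge, and the $\tau_{i_0}=Z_{i_0}^{q_e}+r_{i_0}$ argument) produces a non-near point as soon as $J\neq\mathcal{J}_{red}$; a standard-basis/$\nu^*$ computation then lifts ``not near for the cone'' to ``not near for $\mathcal{X}$'', giving the contradiction. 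There is no circularity in your plan (Proposition~\ref{prop:HSconeconstant} is proved independently of this lemma), so the reduction itself is legitimate.

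The genuine gap is the step you yourself call the main obstacle: the assertion that for every graded prime $\mathfrak{q}$ there is $\mathfrak{p}\subset A$ with $H_{\mathcal{X}}(\mathfrak{p})\leq H_{C_x(\mathcal{X})}(\mathfrak{q})$ is not a quotable fact, and the one-sentence appeal to ``the flat Rees deformation plus Bennett semicontinuity'' does not prove it. Concretely: (i) semicontinuity must be applied on the Rees total space $T=\operatorname{Spec}A[\fm t,t^{-1}]$, an auxiliary finite-type $A$-scheme, and the lemma only assumes $A$ catenary Noetherian --- catenarity does not pass to finite-type extensions, so you need universal catenarity/excellence or a prior reduction to $\widehat{A}$ (which the paper also performs, but you should say so); (ii) comparing $H$ on $T$ with $H$ on its Cartier-divisor special fiber $C$ costs a shift (one only gets an inequality after one partial-sum transform of the modified function of Definition~\ref{Def:HS}), and with an \emph{arbitrary} generization of $\mathfrak{q}$ the resulting chain degenerates to a triviality; one must take the $\mathbb{G}_m$-invariant generization $\mathfrak{p}A[t,t^{-1}]$ with $\ini_\fm(\mathfrak{p})\subseteq\mathfrak{q}$ (it suffices to treat minimal $\mathfrak{q}$, each of which contains some $\ini_\fm(\mathfrak{p})$ with $\mathfrak{p}$ minimal) so that the shifts on both sides match, and then observe that equality of the once-summed functions still recovers $H_C(\mathfrak{q})=H_C(\mathrm{vertex})$; (iii) even the anchor equality $H_{C_x(\mathcal{X})}(\mathrm{vertex})=H_{\mathcal{X}}(x)$ is not purely formal in the modified normalization: $\phi$ involves the minimal codimension over irreducible components, so you must compare the minimal dimension of components of the tangent cone with that of $\mathcal{X}$, an equidimensionality-type statement that again needs more than plain catenarity. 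None of these points is fatal --- I believe your route can be completed --- but as written the pivotal inequality is asserted rather than proved, and the missing verifications are exactly the mathematical content of the lemma.
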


 \begin{proof}

	When  $ \dim \mathcal{X}=0$, $ \Dir_x (\mathcal{X}) = \Rid_x (\mathcal{X})_{red}$ is the origin in $T_x(\mathcal{X})$.
	From now on, we assume $ \dim \mathcal{X} \geq 1 $.

Let us note the following remark that we will use later on.
\begin{Rk}\label{Rk:semicon}
Let $\mathcal{X}'\longrightarrow  \mathcal{X}$ be the blowing up centered at  $x$. The common value  of the Hilbert-Samuel function at the generic points of the irreducible components of  $\mathcal{X}$ do not change: it is $H_{\mathcal{X}}(x)$. Let  $x'\in \mathcal{X}'$ map to  $x$. By specialization \cite[Theorem 2.33(1)]{CoJS}, $H_{\mathcal{X}'}(x')\geq H_{\mathcal{X}}(x)$ and by \cite[Theorem 3.8 p. II.28]{Gi1},   $H_{\mathcal{X}'}(x')\leq H_{\mathcal{X}}(x)$: every point $x'$ above $x$ is near to $x$.
	\end{Rk}

	 \begin{lem}\label{lem:lem2}
	 	Let $ k $ be a field and $ \mathcal{I}\subset R:=k[X_1,\ldots,X_n]$ be a homogeneous ideal.
 	Let $\mathcal{J}$ be the ideal of its ridge and $J$ the ideal of its directrix. Assume
 $$J\not=\mathcal{J}_{red}.$$
Let $x\in \Spec (R/ \mathcal{I})$ be the origin and $\mathcal{X}' \longrightarrow \Spec (R/ \mathcal{I})$ be the blowing up along $x$.
There exists $ x' \in \mathcal{X}'$ above $x$ such that  $$H_{\mathcal{X}'} (x') <H_{\Spec (R/ \mathcal{I})}(x).$$
\end{lem}

\begin{proof}
	By \cite[Lemme~5.2.2]{Gi2}
	every point in $\mathcal{X}'$ is on the strict transform of the ridge and is near to $x$ as a point of the ridge.  	
	Furthermore, a point $x'$ near to $x$ is on the strict transforms of $ \mathcal{Y}_1,\ldots, \mathcal{Y}_e $,
	the hypersurfaces of equations $\sigma_i$ of degrees $q_1, \ldots ,q_e$, $q_1\leq  \cdots \leq q_e$,
$\mathcal{J}=(\sigma_1, \ldots , \sigma_e)$ and near to $x$ for all $ \mathcal{Y}_i $.

\smallskip

	Let us define, as in the proof of Proposition~\ref{prop:HSconeconstant}, $\tau_i=Z_i^{q^e}+r_{i}$, $r_i\in k[W_1,\ldots,W_c]$,
where $\mathrm{Vect}_k(X_1, \ldots ,X_n)=\mathrm{Vect}_k(Z_1, \ldots ,Z_e, W_1, \ldots ,W_c))$ is a renaming of variables after
performing a $k$-linear change of variables.
	As
	 $J\not=\mathcal{J}_{red}$, there is at least one $r_{i_0}$ which is not a $q_e$-th power.
	 So, there is a point $x'\in \pi^{-1}(x)$ on the strict transform of the (unique) prime factor of $\tau_{i_0}=Z_{i_0}^{q^e}+r_{{i_0}}$ which is not near to $x$ for the hypersurface  of equation $\tau_ {i_0}$.
	 Hence, it is not near to $x$ for some $\mathcal{Y}_i$:
	 this point $x'$ is not near to  $x$ for $\mathcal{X}$.
\end{proof}

\noindent
	{\bf \em End of the proof of Lemma~\ref{lem:lem3}.}
	From now on, we suppose that $ \Dir_x (\mathcal{X}) \subsetneq \Rid_x (\mathcal{X})_{red}$ and we will deduce a contradiction.
	By going to the completion, we may suppose that $ \mathcal{O}_{\mathcal{X},x} $ is the quotient of a regular ring $\mathcal{O}_{\mathcal{Z} ,x}$ of residue  field $k$.
	Let $(u,y):=(u_1,\ldots,u_d,y_1,\ldots,y_r)$ be a regular system of parameters of $\mathcal{O}_{\mathcal{Z} ,x}$ such that
	the initial forms of $ (y) $ (with respect to the maximal ideal $\fm$ at $ x $) are a standard basis for	
	the ideal of the directrix of $\mathcal{X}$ at $x$ embedded in $\Spec k[U,Y]$,
where  $ U_i := \ini_\fm (u_i) $ and $ Y_j := \ini_\fm (y_j) $,
for $ i \in \{ 1, \ldots, d \}, j \in \{ 1, \ldots , r \} $.
 Let  $(f_1,\ldots,f_m)$ be a standard basis of $\mathcal{I}\subset \mathcal{O}_{\mathcal{Z} ,x}$ with respect to $(u,y)$.
Let  $\mathcal{Z}'\longrightarrow \mathcal{Z} $ be the blowing up along $x$.
	Denote by $\mathcal{X}'$ the strict transform of $\mathcal{X}$ and recall that $\mathcal{I}\subset \mathcal{O}_{\mathcal{Z},x}$ is the ideal of $\mathcal{X}$.
	The fibre above $x$ is canonically isomorphic  to the  Proj of the tangent cone of $ \mathcal X $ at $ x $.
\begin{equation}\label{eq:x'pasproche}
{\hbox{Let $x'$ be a point in this fibre not near to $x$ for the tangent cone. }}
\end{equation}
Let $\mathcal{I}' \subset \mathcal{O}_{\mathcal{Z}',x'} $ be the strict transform of $\mathcal{I}$ and $t\in \mathcal{O}_{\mathcal{Z}',x'}$ be a generator of the exceptional ideal.
	By \eqref{eq:x'pasproche},  there exists a standard basis $(g_1,\ldots,g_{m'})\in (\mathcal{O}_{\mathcal{Z}',x'}/(t))^{m'}$ of $\mathcal{I}'   \mod(t)$ with $\nu^*_{x'} (g_1,\ldots,g_{m'})<_{lex}\nu^*_{x}( \mathcal{I})$ (notation \eqref{eq:nu*}), this implies $\nu^*_{x'} (\mathcal{I}')<_{lex}\nu^*_{x}( \mathcal{I})$:  $x'$ is not near to $x$, this is the contradiction.
\end{proof}

 \begin{lem} \label{lem:lem4}
  Let $ \mathcal {X}=\Spec R/\mathcal{I} $, with $R$ an excellent regular local ring, and let $x\in\mathcal X$ be the closed point,
  $\mathcal{Z}:=\Spec R$.
  Assume that the Hilbert-Samuel function is constant on $ \mathcal {X}$. 	
  Then, for any ``adapted choice of variables", the characteristic polyhedron at $x$ is empty (Definition \ref{Def:CharPoly}).
 \end{lem}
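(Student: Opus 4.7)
The plan is to argue by contradiction. Suppose that $\Delta(\mathcal{I}; u) \neq \emptyset$ for some adapted choice $(u) = (u_1, \ldots, u_e)$. The excellence of $R$ together with \cite{CoP, CoS} (or, when their technical hypotheses are not met, passage to the $\fm$-adic completion $\widehat{R}$, which does not affect either the local Hilbert--Samuel function or the characteristic polyhedron) allows us to select a suitable standard basis $(g; z) = (g_1, \ldots, g_m; z_1, \ldots, z_r)$ of $\mathcal{I}$ realizing $\Delta(g; u; z) = \Delta(\mathcal{I}; u)$ and prepared at every vertex. By hypothesis, there exists a vertex $v \in \Delta(\mathcal{I}; u)$; by the suitability of $(g;z)$, $v$ is not solvable.

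Combining Lemma \ref{lem:lem3} with the definition of an adapted choice, we see that $\Dir_x(\mathcal{X})$ is cut out by $(Z) = (Z_1, \ldots, Z_r)$ in $\Spec \gr_\fm(R)$, that $(\Rid_x(\mathcal{X}))_{red} = V(Z)$, and that the ridge of $\mathcal{X}$ at $x$ is generated by additive homogeneous polynomials $\sigma_1, \ldots, \sigma_e$ in the variables $(Z)$ of the form recalled in \eqref{eq:faite}. In particular, each $\ini_\fm(g_i)$ lies in $k[Z] \subset k[U, Z]$, a rigidity constraint that will be used below.

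Next I transport the polyhedron along a chain of blowups at near closed points. Blow up $\mathcal{Z}$ at $x$; by Remark \ref{Rk:semicon}, every point of the fibre above $x$ is near to $x$. Choose an index $i_0$ with $v_{i_0} > 0$ and pick a near closed point $x^{(1)}$ in the corresponding chart. Iterating this choice and using the transformation law of projected polyhedra under monoidal transforms (cf.~\cite{H} and \cite[Chapter~18]{CoJS}), one produces a sequence of near closed points $x = x^{(0)} \leftarrow x^{(1)} \leftarrow \cdots \leftarrow x^{(N)}$ together with transformed suitable bases $(g^{(j)}; z^{(j)})$ of the strict transforms of $\mathcal{I}$. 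By the combinatorics of these transformations, after finitely many steps the translated image of $v$ reaches a position on the boundary of the positive orthant while remaining a non-solvable vertex of $\Delta(g^{(N)}; u^{(N)}; z^{(N)})$.

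At the stage $N$, the initial form of $(g^{(N)})$ at the boundary vertex produces a homogeneous ideal whose associated cone $\widetilde{C}$ has directrix strictly smaller than its reduced ridge---the strictness following from the preserved non-solvability. Proposition \ref{prop:HSconeconstant} applied to $\widetilde{C}$ then gives a non-constant Hilbert--Samuel function on $\widetilde{C}$, and Lemma \ref{lem:lem2} upgrades this into an explicit point above $x^{(N)}$ which is not near to $x^{(N)}$. Composed with the blowup sequence from $x$ to $x^{(N)}$, this produces a point above $x$ which is not near to $x$, contradicting Remark \ref{Rk:semicon} applied at each stage of the sequence. The main obstacle is precisely step three: tracking the polyhedron under the iterated transforms and verifying that the chosen vertex survives each blowup as a non-solvable vertex, so that the terminal cone $\widetilde{C}$ is genuinely degenerate in the sense needed to apply Proposition \ref{prop:HSconeconstant}. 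This is where the delicate formal-coordinate analysis underlying Hironaka's characteristic-polyhedra theory, as developed in \cite{H} and reorganized in \cite{CoJS}, will have to be invoked.
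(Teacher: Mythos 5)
Your overall strategy (contradiction, well-prepared polyhedron, blowups that keep all exceptional points near, a final non-near point contradicting Remark~\ref{Rk:semicon}) starts in the right direction, but the core of your argument -- ``track the vertex $v$ under a chain of blowups at \emph{closed} near points until it reaches the boundary of the positive orthant while staying non-solvable'' -- is exactly the step that does not work as stated, and it is where the paper's proof does something genuinely different. Under a point blowup the polyhedron transforms (in the origin of the $u_1$-chart) by $(a_1,\ldots,a_d)\mapsto(a_1+\cdots+a_d-1,a_2,\ldots,a_d)$; only the \emph{first} coordinate is controlled, the quantity $\delta$ does not simply drop by $1$ at later stages, vertices may merge or become solvable after the coordinate changes needed at non-origin or other-chart points, and no finiteness of your process is available a priori -- this is precisely the classical difficulty, not a detail to be ``invoked'' from \cite{H}. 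The paper avoids it: after a \emph{single} point blowup it passes to the origin $x'$ of the $u_1$-chart, checks by the explicit formula \eqref{eq.ini} that preparedness of the first-face vertices is preserved there, and then iterates blowups along the \emph{positive-dimensional} centers $V(y',u_1')$, which are permissible precisely because the Hilbert-Samuel function is constant (Remark~\ref{Rk:semicon} plus Remark~\ref{Rk:CJS_Thm_3.3}); these centers lower the first coordinate by exactly $1$ at each step, so an induction on $\delta$ terminates. The hypothesis of constancy is then exploited twice per step: permissibility forces $\delta-j\geq 1$, and the Claim~\ref{cl:delta} (an analysis of the directrix at the \emph{generic point} $\zeta$ of the center, using Hironaka's Corollary~4.1.1 and Lemma~\ref{lem:lem3} at $\zeta$) rules out $\delta-j=1$, so the first coordinate must stay $>1$ forever while dropping by $1$ each time -- the contradiction. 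Your proposal contains no substitute for either of these two mechanisms.

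Your endgame is also not justified: from a non-solvable vertex sitting on the boundary of the orthant you infer that ``the associated cone $\widetilde{C}$ has directrix strictly smaller than its reduced ridge'' and then apply Proposition~\ref{prop:HSconeconstant} and Lemma~\ref{lem:lem2}. Non-solvability of a vertex is a statement about the impossibility of removing it by a translation $y\mapsto y+\text{(monomial in }u)$; it does not imply any degeneracy of the directrix versus the ridge of the ideal generated by the initial forms at that vertex, and Lemma~\ref{lem:lem2} applies to the \emph{tangent cone} of the transform at the point in question, not to an auxiliary vertex cone. Indeed, since the Hilbert-Samuel function remains constant on every transform (Remark~\ref{Rk:semicon}), Lemma~\ref{lem:lem3} already guarantees directrix $=$ reduced ridge at every such point, so a contradiction can never be produced this way at the level of tangent cones of the transforms; the paper instead extracts the contradiction from the interplay between the polyhedron's first coordinate, permissibility of $V(y',u_1')$, and the directrix at the generic point $\zeta$ of the center. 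As written, your proof has a genuine gap at both the descent step and the final implication.
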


\begin{proof}
	 Let us first precise what is an ``adapted choice of variables": we mean a system $(u):=(u_1,\ldots,u_d)\in \mathcal{O}_{\mathcal{Z},x}^d$ which can be
extended to a regular system of parameters $(u,y)=(u_1,\ldots,u_d,y_1,\ldots,y_r)$ of $R$ such that $(\ini_\fm(y_1),\ldots,\ini_\fm(y_r))$ are equations of the directrix of $\mathcal{X}$ at $x$,
	 where $ \fm \subset  R $ is the unique maximal ideal.
	 The statement is that for any such system $(u)$, $\Delta(\mathcal{I};u)=\emptyset$ where $\mathcal{I}\subset\mathcal{O}_{\mathcal{Z},x} $ is the ideal of $\mathcal{X}$.

  Suppose the statement is wrong.
  Thus, we can find a regular system of parameters $(u,y)=(u_1,\ldots,u_d,y_1,\ldots,y_r)$ and  a standard basis $f=(f_1,\ldots,f_m)$  at $x$ of the ideal ${\mathcal{I}}$ such that the vertices of the first face of $\Delta(f;u;y)$ (Definition~\ref{Def:delta_first}) are ``prepared"
  (see comments right after Definition \ref{Def:CharPoly}). This implies that they are vertices of the characteristic polyhedron  $\Delta(\mathcal{I};u)$.
  Let $ \pi_1 \colon \mathcal{Z}'\longrightarrow  \mathcal{Z}$ be the blowing up along $x$ and let $x'$  be the point of parameters $(u',y'):=(u_1,u_2/u_1,\ldots,u_d/u_1,y_1/u_1,\ldots,y_r/u_1)$ (origin of the ``$ u_1 $-chart'').
  Set $\delta := \delta(\Delta(f;u;y)) =\delta(\Delta(\mathcal{I};u))$ (Definition~\ref{Def:delta_first})
  to be the modulus of the vertices of the first face.

  \smallskip

  As $\ini_\fm(f_i)\in k(x)[\ini_\fm(y_1),\ldots,\ini_\fm(y_r)]$, $1\leq i \leq m$, where $ k := R/ \fm $, we have $ \delta > 1 $.
  The usual computations (e.g., analogous to \cite[Proof of Proposition~3.15, formula (4.4)]{CoScDim2}) show that the smallest first
   coordinate of points of $\Delta(f';u';y')$ is $\delta-1>0$.
  Moreover, a point $v'$ with first coordinate $\delta-1$ is obtained by the affine transformation
  $$(a_1,a_2,\ldots,a_d) \mapsto (a_1+\cdots+a_{d}-1,a_2,\ldots,a_d)=(\delta-1,a_2,\ldots,a_d)
  $$
  where $ v := (a_1,a_2,\ldots,a_d)$ is a point of the first face of $\Delta(f;u;y)$.
  Note that if $v'$ is a vertex of $\Delta(f',u',y')$, then $v$ is a vertex of the first face of $\Delta(f,u,y)$.

  \smallskip

  For $ j \in \{ 1, \ldots, m \} $ and $v$ a vertex,
  we write the initial form of $ f_j $ at $ v $ (Definition~\ref{Def:delta_first}) as $$\ini_v(f_j)=F_j(Y_1,\ldots,Y_r )+\sum_{0 \leq \vert A\vert \leq m_j-1} \lambda_A Y^A U^{(m_j-\vert A\vert)v},$$
  where $F_j\in k[Y_1,\ldots,Y_r]$ is homogeneous of degree $m_j:=\ord_{\fm} (f_j)$.
  If the corresponding point $ v' := (\delta - 1 , a_2 , \ldots, a_d ) \in \Delta (f';u';y') $ after the blowing up is a vertex,
  we have
  \begin{equation}\label{eq.ini}
  \begin{array}{l}
  \ini_{v'}(f'_j) =
  \\[8pt]
  \displaystyle

  F_j(Y') + \sum_{0 \leq \vert A\vert\leq m_j-1} \lambda_A {Y'}^A {U'_1}^{(m_j-\vert A\vert)(\vert v\vert -1)} {U'_2}^{(m_j-\vert A\vert)a_2}\cdots {U'_d}^{(m_j-\vert A\vert)a_d }=
  \\[15pt]
  \displaystyle

   F_j(Y') + \sum_{0\leq \vert A\vert\leq m_j-1} \lambda_A {Y'}^A {U'_1}^{(m_j-\vert A\vert)(\delta -1)} {U'_2}^{(m_j-\vert A\vert)a_2}\cdots {U'_d}^{(m_j-\vert A\vert)a_d } .
   \end{array}
    \end{equation}

   Further, since $\ini_v(f_j)$ is prepared, so is $ \ini_{v'}(f'_j)$.
   In particular, $v'$ is not solvable, i.e.: $v'$ is a vertex of $\Delta(\mathcal{I}';u')$ where $\mathcal{I}'$ is the strict transform of $\mathcal{I}$.
   By Remark~\ref{Rk:semicon}, $V(y',u_1)\subset \mathcal{X}'$ is permissible at $x'$, id est  $\delta-1\geq 1$.

\begin{claim}\label{cl:delta}  If  $\delta-1= 1$, we claim that the ideal
$$
J_\zeta \subset \mathrm{gr}_\zeta \mathcal{O}_{\mathcal{Z}',\zeta}=
\mathrm{Fr}\left ({\mathcal{O}_{\mathcal{Z}',\zeta} \over (u'_1,y')}\right ) [U'_1, Y'_1, \ldots ,Y'_r]
$$
of the directrix at the generic point $\zeta$ of $V(y',u_1)$ is generated by the initial forms $(U'_1,Y'_1,\ldots,Y'_r)$ of the elements $(u'_1:=u_1,y'_1,\ldots,y'_r)$ (abuse of notations with \eqref{eq.ini}).
  \end{claim}
  \begin{proof}
   For $1\leq j\leq m$,
  the initial forms are (with the obvious abuse of notation)
\begin{equation}\label{lasteq}
 \ini_{\zeta}(f_j)= F_j(Y'_1,\ldots,Y'_r )+\sum_{0 \leq \vert A\vert\leq m_j-1} F_{j,A} {Y'}^A {U'_1}^{(m_j-\vert A\vert)(\delta -1)}
\end{equation}
$$\in \mathcal{O}_{\mathcal{Z}',x'}/ (u_1', y')[U'_1,Y_1', \ldots, Y_r'] , $$
by identifying  $\mathcal{O}_{\mathcal{Z}',x'}/ (u_1', y')$ with the polynomial ring $k[\overline{u}_2',\ldots,\overline{u}_d']_{(\overline{u}_2',\ldots,\overline{u}_d')}$,
where $ \overline{u}_j'=u_j'\ \mod (u_1', y')$, $2\leq j \leq d$, we get $F_{j,A} \in k[\overline{u}_2',\ldots,\overline{u}_d']$.

By \cite[Lemma~1.9]{H2},  $(\ini_{\zeta}(f_1),\cdots,\ini_{\zeta}(f_m))$ generate in $k(\zeta)[U'_1,Y_1', \ldots, Y_r']$ the ideal of the tangent cone of
$\mathcal{X}'$ at $\zeta$.
The field extension $k(x)\longrightarrow k(\zeta)=k(x)(\bar{u_2}',\ldots,\bar{u_d}')$ is separable, so
by \cite[Lemma 2.10 p.18]{CoJS}, the ideal of the directrix of $(F_1(Y'_1, \ldots ,Y'_r),\ldots,F_m(Y'_1, \ldots ,Y'_r))$ in $k(\zeta)[U'_1,Y_1', \ldots, Y_r'] $ is $(Y'_1,\ldots,Y'_r)$.
Assume   that the ideal $J_\zeta \subset k(\zeta)[U'_1,Y_1', \ldots, Y_r']$
 has codimension   $\leq r$.
 	Then, we have   $J_\zeta= (Z'_1,\ldots,Z'_r)$, with
\begin{equation}\label{eq.tauzeta}
 Z'_i=Y'_i+\lambda_i U'_1,\ \lambda_i\in k(\zeta), \ 1\leq i \leq r.
 \end{equation}
 Furthermore, we have, for $ 1\leq j \leq m$,
\begin{equation}\label{eq.zeta}
\ini_{\zeta}(f_j)= F_j(Y'_1+\lambda_1  U'_1,\ldots,Y'_r+\lambda_r U'_1 ) \in\mathcal{O}_{\mathcal{Z}',x'}/ (u_1', y')[U'_1,Y_1', \ldots, Y_r'] .
 \end{equation}

 Take any valuation $w$ on $k(\zeta)[U'_1,Y_1', \cdots, Y_r'] $ obtained by giving positive weights on $U'_1,\bar u'_2,\cdots,\bar u'_d$ and weight $1$ on the $Y_i$ such that $w(Z'_i)=1$, for $1\leq i \leq r$, and $\ini_w(Z'_i)\not=Y'_i\in\gr_w k(\zeta) [U'_1,Y_1', \ldots, Y_r'] $ for at least one $i$,
and such that, for all $i$,  $1\leq i \leq r$, $\ini_w(Z'_i)=Y'_i+ \mu_i {U'_1} {\overline {u}'_2}^{a(i,2)} \cdots  {\overline {u}'_d}^{a(i,d)}$ with $\mu_i \in k$, $a(i,j) \in \Z$, $2\leq j \leq d$.
By \cite[Corollary 4.1.1 p. 286]{H}, for at least one $j$, $1\leq j \leq m$,  $\ini_w(F_j)\not=F_j(Y'_1,\ldots,Y'_r) $:
 by \eqref{eq.zeta}, the exponents $a(i,j)$ are all in $\N$. By taking all the possible $w$, in \eqref{eq.tauzeta}, for all $i, 1\leq i \leq r$,
 we get
 $\lambda_i \in\mathcal{O}_{\mathcal{Z}',x'} / (u_1', y')$.

We can find $(z_1,\ldots,z_r)\in \mathcal{O}_{\mathcal{Z}',x'}^r$ with
 $\ini_{\zeta}(z_i)=Z'_i$,  and $\Delta(f',u',z)$ has only vertices with first coordinate $>1$.
 This contradicts the fact that all vertices of  $\Delta(f',u',y')$ of abscissa $\delta-1=1$ are prepared.
 We arrived to a contradiction which proves the claim.
 \end{proof}

\noindent
{\bf \em End of the proof of Lemma~\ref{lem:lem4}.} Assume $\delta-1=1$, By Claim~\ref{cl:delta} above, the initial forms $(U'_1,Y'_1,\ldots,Y'_r)$ generate the ideal of the directrix of  $\mathcal{X}'$ at $\zeta$.
By Lemma~\ref{lem:lem3}, $(U'_1,Y'_1,\ldots,Y'_r)$ are the equations of the reduced ridge at $\zeta$: in $\mathcal{X}''$ there is no point near to $\zeta$, as the Hilbert-Samuel function is constant on $\mathcal{X}'$.
This contradicts Remark \ref{Rk:semicon}  applied to $\zeta, \mathcal{X}'$.

All  this leads to $\delta-1>1$.

\smallskip

Let $$ \pi_2 \colon \mathcal{X}'' \longrightarrow  \mathcal{X}' $$ be the blowing up of $\mathcal{X}'$ along  $ D := V(y',u_1)$. Consider the point $x'' \in \pi_2^{-1} (D) \subset {\mathcal X}'' $ of parameters
$$(u'',y''):=(u'_1,u'_2,\ldots,u'_d,y'_1/u_1',\ldots,y'_r/u_1').$$
At $ x'' $,
each vertex with smallest first coordinate of  $\Delta(f'',u'',y'')$  is not solvable with first coordinate $\delta-2>0$.
By the same argument as above, one can deduce that $\delta-2>1$. An induction on $\delta$ leads to a contradiction.
\end{proof}

\bigskip

\begin{proof}[\bf \em End of the Theorem's proof]
	Let $\mathcal{X}=\mathrm{Spec}A$ be an affine scheme, $A$ an excellent local ring and $x\in\mathcal X$ be the closed point. Assume that
the Hilbert-Samuel function is constant on $\mathcal{X}$.  By Lemma \ref{lem:lem3},
the directrix at $x$ coincides with the reduced ridge at $x$. This is  hypothesis (*) of \cite[Proposition~4.1]{CoS}.

Suppose $A=R/I$ for some excellent regular local ring $R$.
By \cite[Proposition~4.1]{CoS}, there exist a standard basis $f:=(f_1,\ldots,f_m)\in R^m$ for the ideal of $\mathcal X $ at $ x $
and
a regular system of parameters $(u,y)$ of $R$ such that  $\Delta(f;u;y)=\emptyset$.
By Lemma~\ref{lem:lem4},  $V(y)$ is permissible  for $\mathcal{X}$ at $x$.
Since the reduced ridge coincides with the directrix, the blowing up along $V(y)$ has no point near to $x$. In a neighbourhood of $x$, $V(y)$ is the Hilbert-Samuel stratum of $\mathcal{X}$ which is ${\mathcal{X}}_{red}$~: $V(y)={\mathcal{X}}_{red}$.
This ends the proof in this case.

 If $\mathcal X$ is not embedded in a regular scheme, the completion $\widehat{A}$ of the local ring $A$ at its maximal ideal is the quotient of a regular ring $R$.
 By the argument above, there exist regular parameters $(y)$ in $R$ such that $V(y)\subset \Spec (R)$
 is the Hilbert-Samuel stratum of $\widehat{\mathcal{X}}=\mathrm{Spec} \widehat{A}$. By \cite[Lemma~2.37(2)]{CoJS}, $V(y)$ is the preimage of $\mathcal{X}$'s Hilbert-Samuel stratum  which is  ${\mathcal{X}}_{red}$, since $A$ is excellent. By \cite[Lemma~2.37(2)]{CoJS}, ${\mathcal{X}}_{red}$ is regular at $x$.
\end{proof}

\begin{Rk} There exist excellent schemes  $\mathcal X$ with $\mathcal {X}_{red}$ regular and with a non-constant Hilbert-Samuel function, even if $\mathcal X$ is a complete intersection.
\end{Rk}

Look at this example: $\mathcal X\subset \Spec k[X_1,X_2,X_3]$ with ideal
$${\mathcal I}=(X_1^2+X_2X_3^2,X_2^2).$$
We have $\mathcal {X}_{red}=V(X_1,X_2)$.
On $\mathcal X$, the Hilbert-Samuel function takes different values at the origin and at the generic point.

Here is a different argument for this~: the characteristic polyhedron $\Delta({\mathcal I}; X_3)$ is not empty.
By Lemma~\ref{lem:lem4}, the Hilbert-Samuel function of $\mathcal X$ cannot be constant.

\end{document}